\newtheorem{theorem}{Theorem}[section]
\newtheorem{proposition}[theorem]{Proposition}
\newtheorem{corollary}[theorem]{Corollary}
\theoremstyle{definition}
\newtheorem{example}[theorem]{Example}
\theoremstyle{definition}
\newtheorem{remark}[theorem]{Remark}
\newtheorem{definition}[theorem]{Definition}
\newtheorem{thm}{Theorem}
\DeclareMathOperator{\Ker}{Ker}
\DeclareMathOperator{\sgn}{sgn}
\DeclareMathOperator{\id}{id}
\DeclareMathOperator{\mc}{MC}
\DeclareMathOperator{\MC}{MC_\bullet}
\DeclareMathOperator{\Map}{\mathsf{Map}}
\DeclareMathOperator{\sSet}{\mathsf{sSet}}
\newcommand{\Sull}{A_{\mathrm{PL}}}
\newcommand{\D}{\mathcal{\D}}
\newcommand{\Z}{{\mathbb{Z}}}
\newcommand{\Q}{{\mathbb{Q}}}
\newcommand{\C}{{\mathbb{C}}}
\newcommand{\GL}{G\textrm{-}\mathsf{s\widehat L_\infty}}
\newcommand{\GsSet}{G\textrm{-}\mathsf{sSet}}
\newcommand{\GCDGA}{G\textrm{-}\mathsf{CDGA}}
\definecolor{red}{rgb}{1,0.1,0.1}
\definecolor{blue}{rgb}{0.1,0.1,1}
\begin{document}

\title{On the homotopy fixed points of Maurer-Cartan spaces with finite group actions}

\author{José M. Moreno-Fernández, Felix Wierstra}

\date{}

\maketitle

\abstract{We develop the basic theory of Maurer-Cartan simplicial sets associated to (shifted complete) $L_\infty$ algebras
equipped with the action of a finite group.
Our main result  asserts that the inclusion of the fixed
points of this equivariant simplicial set into the homotopy fixed points is a homotopy equivalence of Kan complexes, provided the $L_\infty$ algebra is concentrated in non-negative degrees. 
As an application, and under certain connectivity assumptions, we provide rational algebraic
models of the fixed and homotopy fixed points of mapping spaces equipped with
the action of a finite group.}

\section{Introduction}

The Maurer-Cartan simplicial set $\MC(L)$ associated
to a complete $L_\infty$ algebra $L$, as well as its smaller analog, the Deligne-Hinich-Getzler $\infty$-groupoid $\gamma_\bullet(L)$, \cite{Hin97A,Get09} are of vital importance in
deformation theory, rational homotopy theory, mathematical physics, and other fields (see for example \cite{Bar93,Kon03,Prid10,Bui20,Rob20}).
Since these are such important objects, many of their properties are well understood.
Nevertheless, 
these objects have not yet been studied in the equivariant setting.

\smallskip

The goal of this paper is to explain how the Maurer-Cartan 
simplicial set extends to the $G$-equivariant setting as a functor $$\MC : G\textrm{-}\mathsf{s\widehat L_\infty} \to G\textrm{-}\mathsf{sSet}$$
from  the category of complete $sL_\infty$ algebras equipped with a $G$-action to simplicial sets with a $G$-action. 
We study the properties of this functor in the case when $G$ is finite.
In practice, we work with \emph{shifted} $L_\infty$ algebras, i.e. under the convention that all higher $L_\infty$ brackets are of degree $-1$.
More about this choice in Section \ref{MCSimplicial}.
Our motivation to study the extension of the Maurer-Cartan simplicial set to the $G$-equivariant setting arises mainly from the study of the
rational homotopy type of spaces
endowed with a $G$-action.
We will recall the necessary background on 
equivariant homotopy theory and explain how to equip the Maurer-Cartan simplicial set with the action of a group when the associated complete $sL_\infty$ algebra carries such an action.
If $L$ is an $sL_\infty$ algebra 
with a $G$-action, we say that $L$ is 
a $G$-$sL_\infty$ algebra.

As is well-known, 
for a $G$-space or simplicial set $X$, 
the classical fixed points space $X^G$ under a  given $G$-action is homotopically speaking poorly behaved.
This problem is overcome by forming one of the central constructions in $G$-equivariant homotopy theory: the space of \emph{homotopy fixed points}, $$X^{hG} := \Map_G\left(EG,X\right).$$
Here, $EG$ is a contractible space with a free $G$-action.
Collapsing $EG\xrightarrow{\simeq} *$ provides an inclusion $X^G \hookrightarrow X^{hG}$. 
The main result in this paper reads as follows and can be found as Theorem \ref{HomEq}.

\begin{thm}\label{Teo:Main}\em
		Let $G$ be a finite group and  $L=L_{\geq 1}$ a positively graded complete $G$-$sL_\infty$-algebra. 
		Then, the natural inclusion 
	\begin{equation*}
		\MC \left(L\right)^G \hookrightarrow \MC\left(L\right)^{hG}
	\end{equation*} is a homotopy equivalence of Kan complexes. 
\end{thm}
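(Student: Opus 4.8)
The plan is to reduce the statement to a computation of homotopy groups, exploiting crucially that $|G|$ is invertible in the ground field $\K$ (as it is in the rational, characteristic-zero setting of the paper): by Maschke's theorem $\K[G]$ is then semisimple, so the functor of $G$-invariants is exact and the higher group cohomology $H^{>0}(G;-)$ of any $\K[G]$-module vanishes. I would first identify the strict fixed points algebraically. A simplex of $\MC(L)$ is a Maurer--Cartan element of $L\,\hat\otimes\,\Omega_\bullet$, and since $G$ acts trivially on the polynomial forms $\Omega_\bullet$ one has $(L\,\hat\otimes\,\Omega_\bullet)^G = L^G\,\hat\otimes\,\Omega_\bullet$. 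As the brackets are $G$-equivariant they restrict to the invariants, so $L^G$ is again a positively graded complete $G$-$sL_\infty$-algebra (with trivial action) and the $G$-fixed simplices are exactly the Maurer--Cartan elements of $L^G\,\hat\otimes\,\Omega_\bullet$. This gives a natural identification $\MC(L)^G \cong \MC(L^G)$ of Kan complexes; in particular positivity is inherited, so $\MC(L)^G$ is connected and it suffices to compare homotopy groups based at $0$.

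Second, I would compute the homotopy fixed points of $X := \MC(L)$, a connected Kan complex by positivity. Here I would use the Bousfield--Kan homotopy fixed point spectral sequence $E_2^{s,t} = H^s\!\left(G;\pi_t(X)\right) \Rightarrow \pi_{t-s}\!\left(X^{hG}\right)$. The homotopy groups $\pi_t(X)$ carry a natural $G$-action and are $\K[G]$-modules, so by the vanishing of higher group cohomology the spectral sequence collapses onto the $s=0$ line, yielding a natural isomorphism $\pi_t\!\left(X^{hG}\right) \cong \pi_t(X)^G$.

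Third, I would match the two sides. By the standard computation of the homotopy groups of the Maurer--Cartan space of a positively graded complete $sL_\infty$-algebra \cite{Get09}, applied naturally to $L^G$, one gets $\pi_t\!\left(\MC(L^G)\right)$ identified with the (co)homology of $L^G$; and exactness of $(-)^G$ identifies this with $H(L)^G \cong \pi_t(X)^G$. Tracing the naturality of these isomorphisms along the inclusion $\MC(L)^G \hookrightarrow X^{hG}$ shows that the induced map on $\pi_t$ is precisely the identity isomorphism $\pi_t(X)^G \xrightarrow{\cong} \pi_t(X)^G$. Since both spaces are connected Kan complexes and the inclusion is a $\pi_*$-isomorphism, Whitehead's theorem upgrades it to a homotopy equivalence.

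The step I expect to be the main obstacle is the rigorous handling of the homotopy fixed point spectral sequence: its convergence for the possibly unbounded complex $X$, and the verification that the $G$-action on $\pi_t(X)$ is the one induced on (co)homology. I would likely circumvent the convergence subtleties by replacing the single spectral sequence with an induction along the completeness tower $L = \lim_n L/F^nL$. Each stage is a $G$-equivariant principal fibration with abelian fiber $\MC\!\left(F^n/F^{n+1}\right)$, a product of equivariant Eilenberg--MacLane spaces on which the collapse $(-)^G \simeq (-)^{hG}$ is transparent; I would then propagate the equivalence up the tower by the five-lemma and pass to the inverse limit, the relevant $\lim^1$-terms vanishing since the systems are levelwise those of invariants and hence Mittag--Leffler. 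The delicate point throughout is to ensure that both $(-)^G$ and $(-)^{hG}$ carry this tower of equivariant fibrations to towers of fibrations with the expected fibers.
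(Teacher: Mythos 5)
Your proposal, in the form you yourself fall back to, is essentially the paper's proof: the paper also argues by (i) identifying $\MC(L)^G\cong\MC(L^G)$, (ii) proving the abelian case, and (iii) inducting up the tower $L/F^nL$ of $G$-equivariant principal fibrations with abelian fibers, comparing the two long exact sequences via the five-lemma and passing to inverse limits (using that a levelwise weak equivalence between towers of fibrations induces a weak equivalence on limits — note this makes your Mittag--Leffler remark unnecessary: one does not need $\lim^1$ to vanish, only that the $\lim^1$-terms are matched by the levelwise isomorphisms). The one genuine difference is the treatment of the abelian base case: the paper constructs an explicit $G$-equivariant simplicial retraction $p$ and simplicial homotopy $K$ realizing $ip\simeq\id$ on $\MC(L)^{hG}$, with the averaging $\frac{1}{|G|}\sum_{g\in G}f(\tau,(g,e,\dots,e))$ playing the role that Maschke semisimplicity plays for you; your route via equivariant Eilenberg--MacLane spaces and $H^{>0}(G;V)=0$ for rational $V$ is more conceptual but needs the $G$-equivariant splitting of $\MC$ of an abelian algebra into EM factors (available rationally), whereas the paper's is self-contained and purely simplicial.

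By contrast, your headline approach — running the Bousfield--Kan homotopy fixed point spectral sequence directly on $X=\MC(L)$ — has a gap beyond convergence that you should name: for $L=L_{\geq 1}$ the fundamental group $\pi_1(\MC(L))\cong H_1(L)$ carries the Baker--Campbell--Hausdorff product and is in general \emph{nonabelian}, so it is not a $\K[G]$-module, the identification $E_2^{s,1}=H^s(G;\pi_1(X))$ involves nonabelian cohomology (undefined for $s\geq 2$), and the Maschke vanishing argument does not apply on that line; this is precisely the fringe-effect problem for the space-level spectral sequence. Your tower induction avoids it, because at each stage the fiber is abelian and only honest $\Q[G]$-modules enter the comparison, with the low-degree (pointed-set) part of the long exact sequences handled by the five-lemma argument exactly as in the paper.
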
 

For any complete $G$-$sL_\infty$ algebra $L$,
its underlying graded vector space of $G$-fixed points $L^G$ can be canonically endowed with a complete $sL_\infty$ algebra structure.
Furthermore, 
there is a natural isomorphism of Kan complexes $$\MC\left(L^G\right) \cong \MC\left(L\right)^G.$$
Therefore, 
Theorem \ref{Teo:Main} replaces the problem of computing the homotopy fixed points space $\MC(L)^{hG}$ by the much simpler 
problem of computing the space  $\MC\left(L^{G}\right)$.
See Remark \ref{Remark1}.

Theorem \ref{Teo:Main} above is a special case of the so-called \emph{generalized Sullivan conjecture} \cite{Car91}.
In particular, 
under the hypotheses of Theorem \ref{Teo:Main}
it implies (see Corollary \ref{Homotopy Groups}) that, at the canonical basepoint 
(namely, the $0$ Maurer-Cartan element), 
there is an isomorphism
$$\pi_*\left( \MC(L)^{hG}\right) \cong \big(\pi_*\MC(L)\big)^G.$$

A weaker form of Theorem \ref{Teo:Main}
can be deduced from Goyo's thesis \cite{Goy89}; 
our result extends and complements some of the main results 
there. 
Roughly speaking, for a finite group $G$,
Goyo 
studied the  inclusion of the fixed points into the homotopy fixed points for the Sullivan realization of a commutative differential graded algebra (CDGA, henceforth).
He required the CDGA's to be 1-connected  finite type Sullivan algebras.
Under those hypotheses, 
the CDGA's are in bijective correspondence with degree-wise nilpotent $sL_\infty$ algebras concentrated in positive degrees,
and the simplicial realization of Sullivan is isomorphic to the Maurer-Cartan simplicial set of the corresponding $L_\infty$ algebra. 
Our result 
only requires non-negatively graded $L_\infty$ algebras and extends from finite type to complete $L_\infty$ algebras,
thus providing a broader class of spaces where the result holds.  
Furthermore, our proof of Theorem \ref{Teo:Main} is completely different from Goyo's approach: 
It is simplicial, and does not rely on the Federer-Schultz spectral sequence
or on the passage to CDGA's, staying in the simplicial Lie world.
The proof of Theorem $\ref{Teo:Main}$ and its corollary is carried out in Section \ref{SullivanConjecture}.

\smallskip

In Section \ref{LieModels}, we 
apply our methods to produce 
rational algebraic models of 
equivariant mapping spaces.
That is, algebraic models of the space $\Map\left(X,\MC(L)\right)$ endowed with the conjugation action, and of the homotopy fixed points of this mapping space. 
Being more precise, let $X$ be a $G$-simplicial set, and $L=L_{\geq 1}$ be a positively graded complete $G$-$sL_\infty$ algebra.
Denote by 
$\Sull(X)$ Sullivan's piece-wise linear de Rham forms on $X$ \cite{Sul77}, 
equipped with the induced $G$-action. 
Let $A$ be a $G$-CDGA model of $X$, that is, a CDGA quasi-isomorphism $A\xrightarrow{\simeq} \Sull(X)$ that is $G$-equivariant. 
Under these assumptions, we prove the following result.
\begin{thm} \label{thm B}\em
There is a natural $G$-equivariant homotopy equivalence of Kan complexes
\begin{equation*}
	\MC\left(A\widehat{\otimes} L\right) \xrightarrow{\simeq} \Map\left(X,\MC\left(L\right)\right).
\end{equation*} 	
In particular, it induces a homotopy equivalence on homotopy fixed points  
$\MC\left(A \widehat \otimes L\right)^{hG} \xrightarrow{\simeq}$

\noindent $ \Map\left(X,\MC(L)\right)^{hG}.$
If furthermore $G$ is finite 
and $A$ is concentrated in degrees strictly smaller than the connectivity of $L$, 
then there is  a natural homotopy equivalence
$$
	\MC\left(\left(A \otimes L\right)^G\right) \xrightarrow{\simeq}\Map\left(X,\MC(L)\right)^{hG}.$$
\end{thm}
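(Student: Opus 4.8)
The plan is to realize the final homotopy equivalence as a composition built from three ingredients: the main theorem (Theorem \ref{Teo:Main}), the natural isomorphism relating the Maurer-Cartan space of a fixed-point algebra to the fixed points of a Maurer-Cartan space, and the homotopy-fixed-points equivalence supplied by the second part of the statement. The crucial point is that Theorem \ref{Teo:Main} is to be applied not to $L$ itself but to the larger complete $G$-$sL_\infty$ algebra $A \widehat\otimes L$, on which $G$ acts diagonally through its actions on $A$ (induced from $X$) and on $L$. Taking the second part as given, we already have
\[
\MC\left(A \widehat\otimes L\right)^{hG} \xrightarrow{\simeq} \Map\left(X,\MC(L)\right)^{hG},
\]
so the task reduces to identifying the source, up to homotopy equivalence, with $\MC\left((A \otimes L)^G\right)$.

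First I would verify that the hypothesis on $A$ places $A \widehat\otimes L$ in the range where Theorem \ref{Teo:Main} applies, namely that it is positively graded. This is a direct degree count: a generator $a \otimes x$ with $a$ of degree $i$ in $A$ and $x$ of degree $j$ in $L$ has total degree $j - i$, and the assumption that $A$ is concentrated in degrees strictly smaller than the connectivity of $L$ forces $j - i \geq 1$. Hence $A \widehat\otimes L = \left(A \widehat\otimes L\right)_{\geq 1}$, and Theorem \ref{Teo:Main} yields a homotopy equivalence of Kan complexes
\[
\MC\left(A \widehat\otimes L\right)^G \xrightarrow{\simeq} \MC\left(A \widehat\otimes L\right)^{hG}.
\]
Next I would apply the natural isomorphism $\MC\left(M^G\right) \cong \MC\left(M\right)^G$, valid for any complete $G$-$sL_\infty$ algebra $M$ --- the same construction that yields $\MC\left(L^G\right) \cong \MC\left(L\right)^G$ in Remark \ref{Remark1} --- this time with $M = A \widehat\otimes L$, so that the source above becomes $\MC\left((A \widehat\otimes L)^G\right)$.

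It then remains to pass from the completed tensor product to the ordinary one, identifying $\left(A \widehat\otimes L\right)^G$ with $\left(A \otimes L\right)^G$ as complete $sL_\infty$ algebras; composing this identification with the previous two maps produces the desired equivalence $\MC\left((A \otimes L)^G\right) \xrightarrow{\simeq} \Map\left(X,\MC(L)\right)^{hG}$. The hard part will be exactly this reconciliation of $A \widehat\otimes L$ with $A \otimes L$. Here I would argue that under the standing degree bound only finitely many cohomological degrees of $A$ contribute to any fixed total degree, so that the completion adds nothing and the canonical map $A \otimes L \to A \widehat\otimes L$ is an isomorphism of $G$-$sL_\infty$ algebras; taking $G$-fixed points, which is exact and compatible with the $sL_\infty$ structure since $G$ is finite and the ground field has characteristic zero (averaging provides a splitting), then transports the structures coherently. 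Once this bookkeeping is settled, the statement follows by the formal composition above.
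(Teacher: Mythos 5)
Your handling of the final claim follows the same route as the paper's own proof: apply Theorem \ref{Teo:Main} to the complete $G$-$sL_\infty$ algebra $A\widehat{\otimes}L$, identify $\MC\left(\left(A\widehat{\otimes}L\right)^G\right)$ with $\MC\left(A\widehat{\otimes}L\right)^G$ as in Remark \ref{Remark1}, and compose with the equivalence on homotopy fixed points; your explicit check that the degree hypothesis makes $A\widehat{\otimes}L$ positively graded is correct and is a detail the paper leaves implicit. The genuine gap is that you never prove the first two assertions of the statement --- you assume them. The existence of the natural $G$-equivariant homotopy equivalence $\varphi:\MC\left(A\widehat{\otimes}L\right)\to\Map\left(X,\MC(L)\right)$ is the substantive content of the first claim (the paper's Theorem \ref{Model of Map+Conjugation} together with Remark \ref{remark: Any G-CDGA model is ok}): one takes Berglund's non-equivariant equivalence, checks that its ingredients --- the natural quasi-isomorphisms $\Sull(X)\otimes\Sull\left(\Delta^n\right)\to\Sull\left(X\times\Delta^n\right)$ and the natural map $\mu$ sending a Maurer-Cartan element $\tau$ to the simplicial map $z\mapsto z^*(\tau)$ --- are $G$-equivariant, and then precomposes with the $G$-equivariant weak equivalence $\MC\left(A\widehat{\otimes}L\right)\to\MC\left(\Sull(X)\widehat{\otimes}L\right)$ induced by the $G$-quasi-isomorphism $A\to\Sull(X)$; the second claim then follows from Proposition \ref{Prop: w.e. + G map implies w.e. homotopy fixed points}. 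Since these claims are part of the statement, and the third claim is deduced from them, a proof that simply takes them as given leaves the bulk of the theorem unproved.

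A second, smaller problem is your reconciliation of $A\otimes L$ with $A\widehat{\otimes}L$. The argument that boundedness of $A$ means only finitely many degrees of $A$ contribute to each total degree, hence that completion adds nothing, is not valid: the finitely many contributing pieces $A^i$ may each be infinite-dimensional, and tensoring with an infinite-dimensional vector space does not commute with inverse limits. Concretely, let $G$ act trivially, let $X$ be an infinite discrete $G$-set, so that $A=\Sull(X)$ is infinite-dimensional and concentrated in degree $0$, and let $L$ be abelian, concentrated in degree $2$, equal to $\prod_{n}\Q y_n$ with the complete filtration $F^pL=\prod_{n\geq p}\Q y_n$; the hypotheses of the theorem hold, yet an element such as $\sum_n e_n\otimes y_n$ (with $\left(e_n\right)$ linearly independent in $A^0$) lies in $A\widehat{\otimes}L$ but not in $A\otimes L$, since elements of the latter have bounded rank. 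In fairness, the paper glosses over the same point --- its statement writes $\left(A\otimes L\right)^G$ while its proof produces $\left(A\widehat{\otimes}L\right)^G$ --- so the honest conclusion of your composition, like the paper's, is the equivalence $\MC\left(\left(A\widehat{\otimes}L\right)^G\right)\xrightarrow{\simeq}\Map\left(X,\MC(L)\right)^{hG}$; but your claim that the canonical map $A\otimes L\to A\widehat{\otimes}L$ is an isomorphism under the stated hypotheses is false, and would require an additional assumption such as $A$ being finite-dimensional.
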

The first statement in the result above is Theorem \ref{Model of Map+Conjugation}. 
It  asserts that  $A\widehat{\otimes} L$ is a  $G$-$sL_\infty$-model of the mapping space $\Map\left(X,\MC(L)\right)$ endowed with the conjugation action. 
The second statement, assuming that $G$ is finite, is  Theorem \ref{Models of homotopy fixed points}.
It asserts that  $\left(A\otimes L\right)^G$
is an $sL_\infty$ model for $\Map\left(X,\MC(L)\right)^{hG}$.

\medskip

When $G$ is an infinite group, 
the problems studied in this paper must be approached in a different way.
For such groups, one should not expect a good general relationship between the fixed and homotopy fixed points of $\MC(L)$, see Remark \ref{Rem: Non-finite groups}.

\medskip

It is well-known that the theory of integration or geometric realization of $L_\infty$ algebras can be approached from several perspectives.
Here, we have chosen to work with the Maurer-Cartan simplicial set 
$$\MC : \mathsf{s{\widehat L_\infty}} \to \mathsf{sSet}.$$
There are other alternative constructions: Getzler's $\gamma_\bullet(L)$, Robert-Nicoud--Vallette's  $R(L)$, and Buijs-Félix-Murillo-Tanré's $\langle L \rangle$ (see \cite{Get09,Rob20,Bui20}, respectively).
We explain in Section \ref{Sec: Comparison}
	that our main results also hold for these smaller alternative simplicial sets.

\medskip

\noindent {\bf Conventions:}
In this paper, all the algebraic structures are considered over the field of the rational numbers, but our results hold over any field of characteristic zero.
We use a homological grading for $L_\infty$-algebras.
All topological spaces are assumed to be CW-complexes.
If $V$ is a graded vector space, 
and $G$ is a group that acts linearly on $V$, 
then the graded vector space of fixed points is $$V^G = \left\{v\in V \mid g\cdot v = v \ \forall v\in V\right\},$$
and the graded vector space of coinvariants is $$V_G = V / \textrm{span}\left\{g\cdot v - v \mid g\in G, v\in V\right\}.$$
The linear dual of a graded vector space $V$ is denoted $V^\vee.$

\bigskip

\noindent {\bf Acknowledgments:} The authors are very grateful to Marc Stephan for teaching them about equivariant homotopy theory, and to the Max Planck Institute for Mathematics in Bonn for its hospitality and financial support.
Both authors were partially supported by a Postdoctoral Fellowship of the Max Planck Society while producing this work. The first author has also been partially supported by the MICINN grant PID2020-118753GB-I00 and by the Irish Research Council Postdoctoral Fellowship GOIPD/2019
/823. 
The second author was supported by the Dutch Research Council (NWO) grant number VI.Veni.202.046 and by the Swedish Research Council grant 2019-00536.

\bigskip

\noindent 
\textbf{MSC 2020:} 55P62, 55P91, 55U10.

\noindent 
\textbf{Key words and phrases:} Equivariant rational homotopy theory, L-infinity algebra, Maurer-Cartan simplicial set.

\section{Equivariant rational homotopy theory}
\label{Equivariant Rational}

We recall   the necessary background on $G$-equivariant spaces and simplicial sets in Section \ref{Equivariant}. 
Then, we explain how the Maurer-Cartan simplicial set extends to the $G$-equi\-var\-iant setting in Section \ref{MCSimplicial}.

\medskip

Some classical references on the rational homotopy 
type of $G$-spaces
include \cite{Tri82,Goy89,All93,Scu02}.
The relation between fixed and homotopy fixed points for non-discrete group 
actions on rational 
(not necessarily simply-connected) spaces has been studied in \cite{Bui09,Bui15}.

\subsection{Equivariant homotopy theory background}\label{Equivariant}

We will now recall some of the basics of equivariant homotopy theory, for more details see for example \cite{May96}.
Let $G$ be a finite discrete group (most claims in this section work for topological groups, 
but our interest in this work is on finite groups). 
A $G$-space is a CW-complex $X$ together with a continuous left $G$-action $G\times X \to X$ permuting the cells. 
The \emph{fixed points} for the $G$-action on $X$ is the subspace $$X^G = \left\{ x\in X \mid g\cdot x = x \ \textrm{ for all }g\in G\right\}.$$ 
An equivariant map between $G$-spaces is a continuous map  $f:X\to Y$ which commutes with the $G$-action: 
$f(g\cdot x)=g\cdot f(x)$ for every $x\in X$ and $g\in G$. This defines the category of $G$-spaces. We  also work with the category   $\GsSet$ of $G$-simplicial sets, whose objects are simplicial sets $X_\bullet$  together with left actions $G\times X_n \to X_n$ on the $n$-simplices $X_n$ of $X_\bullet$ compatible with the face and degeneracy maps. A simplicial map is equivariant if it respects the $G$-actions. What we explain in this section for $G$-spaces works as well for $G$-simplicial sets, and vice versa. For brevity, we spell out the details only in one case. If $X$ and $Y$ are $G$-spaces, then the product  $X\times Y$ is a $G$-space with the diagonal action $g \cdot (x,y) = \left(g \cdot x, g \cdot y\right)$.  
The usual mapping space $\operatorname{Map}\left(X,Y\right)$, 
endowed with the compact-open topology, 
becomes a $G$-space with the \emph{conjugation action}  $G \times \Map(X,Y) \to \Map(X,Y)$.
It is given by $(g,f) \mapsto g\cdot f$ as below:
\begin{equation*}
g\cdot f :X\to Y, \quad x \mapsto g \cdot \left(f \left(g^{-1} \cdot x\right) \right) \quad \textrm{ for every } x\in X.
\end{equation*} 
This $G$-action on $\operatorname{Map}\left(X,Y\right)$ is adjoint to the product of $G$-spaces with the diagonal action: 
For any $G$-spaces $X,Y$ and $Z$, there is a natural homeomorphism of $G$-spaces
\begin{equation}\label{Equivariant Tensor-Hom}
\Map\left(X\times Y,Z\right)\cong\Map\left(X,\Map\left(Y,Z\right)\right).
\end{equation}
The set of $G$-equivariant maps  for the corresponding $G$-actions, denoted $\Map_G\left(X,Y\right)$, is defined as the set of fixed points of $\Map\left(X,Y\right)$ with the conjugation action
$$\Map_G\left(X,Y\right) := \Map\left(X,Y\right)^G.$$
This set carries the subspace topology as a subset of the usual mapping space $\operatorname{Map}\left(X,Y\right)$.
The space $X^G$ can be canonically identified with the equivariant mapping space $\Map_G \left(*,X\right)$.
For $G$-simplicial sets, we use the same notation, $\operatorname{Map}_G\left(X_\bullet, Y_\bullet\right)$ and $\operatorname{Map}\left(X_\bullet,Y_\bullet\right)$, for the corresponding simplicial mapping spaces, which we carefully explain next. 
Denote by  $\Delta_\bullet ^n$ the standard $n$-simplex as a simplicial set.
Whenever we consider $\Delta_\bullet^n$ as a $G$-simplicial set for some group $G$, we assume that the $G$-action on $\Delta_\bullet^n$ is trivial.
The mapping simplicial sets have $n$-simplices  
\begin{equation*}
\Map\left(X_\bullet,Y_\bullet\right)_n = \mathsf{sSet} \left(\Delta^n_\bullet \times X_\bullet, Y_\bullet\right), \quad \textrm{ and } \quad \Map_G\left(X_\bullet,Y_\bullet\right)_n = \mathsf{sSet}_G \left(\Delta^n_\bullet \times X_\bullet, Y_\bullet\right).
\end{equation*} The face and degeneracy maps on an $n$-simplex $f$ are respectively given by
\begin{equation*}
d_i\left(f\right) : \Delta_\bullet^{n-1} \times X_\bullet \to Y_\bullet, \quad d_i\left(f\right) = f \circ \left(\delta^i\times \id\right), \quad \textrm{ and } \quad s_j\left(f\right) : \Delta_\bullet^{n+1} \times X_\bullet \to Y_\bullet, \quad s_j\left(f\right) = f \circ \left(\sigma^j\times  \id\right), 
\end{equation*} 
where $\delta^i: \Delta_\bullet^{n-1}\to \Delta_\bullet^{n}$ is the simplicial map induced by the standard $i$-th codegeneracy map of the standard simplex, 
and 
$\sigma^j: \Delta_\bullet^{n+1}\to \Delta_\bullet^{n}$ is the simplicial map induced by the $j$-th coface map of the standard simplex.
In the same way as for $G$-spaces, the $G$-equivariant simplicial maps are the $G$-fixed points for the natural conjugation action on $\Map\left(X_\bullet,Y_\bullet\right)$.
Recall that a \emph{Kan complex} is a simplicial set satisfying the horn-fillers condition, and that if $Y_\bullet$ is a Kan complex, then for any simplicial set $X_\bullet$, the simplicial mapping space $\Map\left(X_\bullet,Y_\bullet\right)$ is also a Kan complex \cite[Proposition 1.17]{Cur71}. Sometimes, we will omit the subscript from the notation of a simplicial set $X_\bullet$, and simply write $X$.\\

 The homotopy theory of $G$-spaces is subtle, and there are several choices for the weak equivalences, see \cite{Bre72,May96}. 
 A (strong) homotopy equivalence of $G$-spaces is an equivariant map $f:X\to Y$ such that for every subgroup $H\leq G$, the induced map between $H$-fixed points $f^H:X^H \to Y^H$ is a weak homotopy equivalence. This condition is usually hard to check, and for us it is enough to use the following weaker notion.
 \begin{definition} 	
 	An \emph{equivariant weak equivalence} is a $G$-equivariant map $f:X\to Y$ that becomes a weak equivalence of spaces when forgetting the group action.
 	That is, $f$ is a $G$-equivariant map that induces an isomorphism on all homotopy groups for every choice of base point, and a bijection on $\pi_0$.
 \end{definition}

\medskip

The fixed points for a given $G$-action on $X$ are not homotopically well behaved. This is solved by introducing the \emph{homotopy fixed points} as the space of equivariant maps $$X^{hG} := \Map_G \left(EG,X\right).$$ 
Here, $EG$ is a contractible space with a free $G$-action. 
The space of orbits of $EG$, denoted $BG$, 
is the \emph{classfying space} of $G$, and there is a well-known principal $G$-bundle  
$$G \hookrightarrow EG \to BG.$$ 
These constructions depend on the choice  of model for $EG$. 
An explicit simplicial model for $EG$ is described in \cite[6.14]{Cur71} (it is called $WG$ there). Since it will be used, we recall it next. The $n$-simplices of $EG$ are
\begin{equation*}
	 EG_n= G^{\times (n+1)}.
\end{equation*} The face and degeneracy maps are given by 
\begin{align*}
	d_i \left(g_{n},...,g_0\right) &= \left\{ \begin{array}{lcl}
	\left(g_{n},...,g_1\right) & \mbox{ if } & i=0 \\
	\left(g_{n},...,g_{i+1},g_{i}\cdot g_{i-1}, g_{i-2},...,g_0\right) & \mbox{ if } & 1\leq i \leq n
	\end{array}
	\right. \\[0.2cm]
	s_j \left(g_{n},...,g_0\right) &= \left(g_{n},..., g_{n-j},e,g_{n-j-1},...,g_0\right)  \quad  \textrm{ for all } \quad  0\leq j \leq n
\end{align*} Here, $e$ denotes the unit element of $G$. The group $G$ acts on the left on $EG$ by $$g \cdot (g_{n},...,g_0)=(g \cdot g_{n},g_{n-1},....,g_0).$$ The $G$-simplicial set described above carries a free $G$-action and can be seen to be contractible, thus providing an explicit model of $EG$. The collapse map $EG \xrightarrow{\simeq} *$ induces an inclusion $X^G \hookrightarrow X^{hG}$ which is usually far from being a homotopy equivalence. For any $G$-space $X$, the homotopy fixed points are the usual fixed points of the conjugation action on the  mapping space: $X^{hG}=\Map\left(EG,X\right)^G$.



\medskip

The following well-known result will be used at several later places.

\begin{proposition}{\emph{\cite[Prop. 1.8]{Car92}}} \label{Prop: w.e. + G map implies w.e. homotopy fixed points}
	Let $f:X\to Y$ be a weak equivalence of simplicial sets which is $G$-equivariant. 
	Then the induced map on homotopy fixed points is a weak equivalence, $f^{hG}:X^{hG}\xrightarrow{\simeq} Y^{hG}$.
\end{proposition}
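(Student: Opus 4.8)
The plan is to reduce the equivariant statement to the non-equivariant hypothesis on $f$ by exploiting that $EG$ is built entirely out of \emph{free} $G$-cells. Since the construction $\Map_G(EG,-)$ is only homotopy invariant on fibrant objects, I would first replace $X$ and $Y$ by weakly equivalent $G$-Kan complexes; this can be arranged $G$-equivariantly, preserves the underlying weak equivalence $f$, and does not change the homotopy type of $\Map_G(EG,-)$. So assume henceforth that $X$ and $Y$ are Kan. The key elementary observation is the free--forgetful adjunction: for any simplicial set $A$ carrying the trivial action, a $G$-equivariant map $G\times A\to X$ is determined by its restriction to $\{e\}\times A$, giving a natural isomorphism
$$\Map_G\left(G\times A,\,X\right)\cong \Map\left(A,X\right).$$
Thus equivariant mapping out of a free cell is just ordinary mapping, which is precisely where the hypothesis on $f$ lives.

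Next I would use the skeletal filtration $\operatorname{sk}_0 EG\subseteq \operatorname{sk}_1 EG\subseteq\cdots$ with $EG=\bigcup_n \operatorname{sk}_n EG$. Because $EG$ is a free $G$-simplicial set, each inclusion $\operatorname{sk}_{n-1}EG\hookrightarrow \operatorname{sk}_n EG$ is a pushout of a coproduct of free cell-boundary inclusions $\coprod G\times\partial\Delta^n\hookrightarrow \coprod G\times\Delta^n$. Applying the contravariant functor $\Map_G(-,X)$ turns these pushouts into pullbacks, exhibiting $\Map_G(\operatorname{sk}_n EG, X)$ as the fibre product of $\Map_G(\operatorname{sk}_{n-1}EG, X)$ with $\prod\Map(\Delta^n,X)$ over $\prod\Map(\partial\Delta^n,X)$, where I have used the isomorphism above. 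Since $X$ is Kan and $\partial\Delta^n\hookrightarrow\Delta^n$ is a cofibration, the restriction $\prod\Map(\Delta^n,X)\to\prod\Map(\partial\Delta^n,X)$ is a Kan fibration; hence each tower map $\Map_G(\operatorname{sk}_n EG,X)\to\Map_G(\operatorname{sk}_{n-1}EG,X)$ is a Kan fibration, and likewise for $Y$.

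Now $f$ induces a map of towers. On the associated pieces the comparison maps are $\Map(\Delta^n,X)\to\Map(\Delta^n,Y)$ and $\Map(\partial\Delta^n,X)\to\Map(\partial\Delta^n,Y)$, both weak equivalences, because mapping a fixed simplicial set into a weak equivalence between Kan complexes yields a weak equivalence. By induction on $n$ (the base case $\operatorname{sk}_0 EG$, a coproduct of free $0$-cells, being immediate from the adjunction) together with the standard pasting lemma for homotopy pullbacks, $f$ induces a weak equivalence $\Map_G(\operatorname{sk}_n EG, X)\xrightarrow{\simeq}\Map_G(\operatorname{sk}_n EG, Y)$ for every $n$. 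Finally, since $X^{hG}=\Map_G(EG,X)=\lim_n \Map_G(\operatorname{sk}_n EG, X)$ is the limit of a tower of Kan fibrations, and similarly for $Y$, I would conclude via the Milnor exact sequence
$$0\to{\lim_n}^1\,\pi_{k+1}\Map_G(\operatorname{sk}_n EG, X)\to\pi_k\left(X^{hG}\right)\to\lim_n\pi_k\Map_G(\operatorname{sk}_n EG, X)\to 0,$$
which is natural in the variable space; as $f$ induces isomorphisms at every level and every basepoint, it induces isomorphisms on both the $\lim$ and $\lim^1$ terms, so $f^{hG}$ is a weak equivalence.

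The step I expect to be the main obstacle is the passage to the inverse limit: ensuring that the skeletal tower is genuinely a tower of Kan fibrations (so that the strict limit computes the homotopy limit), and then controlling the $\lim^1$ term and all basepoints in the possibly non-connected setting. By contrast, the fibrant-replacement reduction and the homotopy-pullback pasting are routine once the free-cell adjunction has done the real work of converting the equivariant problem into an ordinary one.
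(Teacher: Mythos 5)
The paper offers no proof of this proposition at all---it is quoted verbatim from \cite[Prop.~1.8]{Car92}---so the only meaningful comparison is with the standard argument behind that citation, and the body of your proof is exactly that argument: the free-cell adjunction $\Map_G\left(G\times A,X\right)\cong\Map\left(A,X\right)$, the skeletal tower of Kan fibrations $\Map_G\left(\operatorname{sk}_n EG,X\right)$ obtained by mapping out of the pushouts attaching free cells, the inductive homotopy-pullback comparison, and passage to the inverse limit. The limit step is also consistent with how the paper itself handles towers: in the proof of Theorem \ref{HomEq} it invokes the fact that a levelwise weak equivalence between towers of Kan fibrations induces a weak equivalence on inverse limits, citing \cite[Ch.~VI]{Goe99}. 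For Kan complexes $X$ and $Y$, your argument is correct and complete.

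The genuine gap is your opening reduction. You assert that replacing $X$ and $Y$ by $G$-equivariant fibrant models ``does not change the homotopy type of $\Map_G(EG,-)$.'' Applied to the replacement map $X\to X'$, whose source is not fibrant, that assertion is an instance of the very proposition you are proving, so the reduction is circular; and it is false in general, since for non-fibrant $X$ the simplicial set $\Map_G(EG,X)$ does not compute homotopy fixed points (already non-equivariantly, $\Map(A,X)$ has the wrong homotopy type when $X$ is not Kan---simplicial maps out of an infinite-dimensional complex such as $B\mathbb{Z}/2$ into, say, the minimal simplicial circle detect almost nothing). The proposition must therefore be read with the standing hypothesis that $X$ and $Y$ are Kan, which holds in every application in the paper ($\MC(L)$ is Kan by \cite{Get09}, and the relevant mapping spaces are Kan by Proposition \ref{Prop: G Mapping spaces are Kan complexes}); add that hypothesis, delete the replacement step, and your proof stands. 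A minor further caveat: the Milnor sequence as you wrote it only makes sense for $k\geq 1$ with a coherent choice of basepoints, and $\pi_0$ needs separate (if routine) bookkeeping; it is cleaner to quote directly the tower lemma from \cite{Goe99}, as the paper does, rather than argue through $\lim^1$ by hand.
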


Let $G$ be a finite group.
Regard $G$ as a simplicial set, denoted by the same letter, by declaring the set of $n$-simplices $G_n$ to be $G$, for all $n$, 
and all face and degeneracy maps are the identity map.
Denote by $\partial \Delta^n$ the boundary of the simplicial set $\Delta^n$, and by $\Lambda^n_k$ its $k$th horn. 
Endow $\Delta^n, \partial \Delta^n$ and $\Lambda_k^n$, for $k=0,1,...,n$, with the trivial $G$-action. 
The following statement, not exactly phrased as we do, can be found at several places. See for instance \cite{May96,Goe99,Ber03}.

\begin{theorem}
	For a finite group $G$, there is a cofibrantly generated model category structure on $\GsSet$ such that:
	\begin{enumerate}
		\item the weak equivalences are the $G$-equivariant simplicial maps $f:X\to Y$ 
		whose underlying map of simplicial sets is a weak equivalence when forgetting the $G$-action,
		\item the fibrations are the $G$-equivariant simplicial maps $f:X\to Y$ 
		whose underlying map of simplicial sets is a Kan fibration when forgetting the $G$-action, and
		\item the cofibrations are determined by the lifting property against acyclic fibrations.
	\end{enumerate}
	The canonical maps $\partial \Delta^n \times G \hookrightarrow \Delta^n \times G$, for $n\geq 0$, form a set of generating cofibrations, 
	and the canonical maps $\Delta_k^n \times G \hookrightarrow \Delta^n \times G$, for $n\geq 0$, form a set of generating acyclic cofibrations. 
\end{theorem}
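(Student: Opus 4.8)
The plan is to realize this model structure by transferring the standard Kan--Quillen structure on $\sSet$ along the free--forgetful adjunction, equivalently to recognize it as the projective model structure on the diagram category $\sSet^{BG}$. Write $U \colon \GsSet \to \sSet$ for the functor forgetting the $G$-action. Since $\GsSet$ is the functor category $\sSet^{BG}$ indexed by the one-object groupoid $BG$, it is complete and cocomplete, with all limits and colimits computed underlying; in particular $U$ preserves and reflects all colimits. The forgetful functor has a left adjoint $F \colon \sSet \to \GsSet$, namely $F(K) = G \times K$ with $G$ acting freely on the first factor: a $G$-equivariant map $G \times K \to X$ is determined by its restriction to $\{e\}\times K$, giving the natural bijection $\GsSet(G \times K, X) \cong \sSet(K, U(X))$. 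The classes in the statement are exactly the transferred classes for this adjunction, a morphism being a weak equivalence, respectively a fibration, precisely when $U$ sends it to one in $\sSet$.

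I would then invoke the standard transfer principle (Kan's lemma on transfer of model structures, see e.g. \cite{Goe99,Ber03}): for a cofibrantly generated model category $\mathcal{M}$ with generating cofibrations $I$ and generating acyclic cofibrations $J$, and an adjunction $F \colon \mathcal{M} \rightleftarrows \mathcal{N} : U$ with $\mathcal{N}$ cocomplete, the transferred classes form a cofibrantly generated model structure on $\mathcal{N}$ with generating cofibrations $F(I)$ and generating acyclic cofibrations $F(J)$, provided (i) the domains of $F(I)$ and $F(J)$ are small enough to run the small object argument, and (ii) the acyclicity condition holds, i.e. $U$ carries every transfinite composite of pushouts of maps in $F(J)$ to a weak equivalence. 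Taking $I = \{\partial\Delta^n \hookrightarrow \Delta^n\}$ and $J = \{\Lambda^n_k \hookrightarrow \Delta^n\}$ in $\sSet$, one gets $F(I) = \{G \times \partial\Delta^n \hookrightarrow G \times \Delta^n\}$ and $F(J) = \{G \times \Lambda^n_k \hookrightarrow G \times \Delta^n\}$, which match the stated generating sets since $\partial\Delta^n,\Lambda^n_k,\Delta^n$ carry the trivial action and so $G \times \Delta^n = \Delta^n \times G$ as $G$-simplicial sets. Condition (i) is automatic, as every simplicial set is small in the locally presentable category $\sSet$.

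The only step requiring real work---and the sole plausible obstacle---is the acyclicity condition (ii), but it follows cleanly from the colimit-preservation of $U$ noted above. Indeed, $U$ sends each generating acyclic cofibration $G \times \Lambda^n_k \hookrightarrow G \times \Delta^n$ to $\coprod_{G}\left(\Lambda^n_k \hookrightarrow \Delta^n\right)$, a coproduct of horn inclusions, hence an anodyne extension and in particular an acyclic cofibration in $\sSet$. Since $U$ preserves pushouts and transfinite compositions, and the class of acyclic cofibrations in $\sSet$ is closed under both operations, $U$ takes every relative $F(J)$-cell complex to an acyclic cofibration, a fortiori to a weak equivalence. This verifies (ii), and the transfer theorem then delivers all the model-category axioms and identifies the cofibrations as the maps with the left lifting property against acyclic fibrations, as asserted. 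I note that finiteness of $G$ is not actually needed for this existence statement, since a coproduct of anodyne maps is anodyne for an arbitrary index set; it is kept here only to align with the standing hypotheses used elsewhere in the paper.
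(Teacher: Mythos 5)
Your proposal is correct and follows essentially the same route as the paper, which likewise obtains the model structure by applying the transfer principle of \cite[Thm.~11.3.2]{Hir09} to the adjunction $(-)\times G : \sSet \leftrightarrows \GsSet : U$ (your $F(K)=G\times K$ with free action on the $G$-factor agrees with the paper's $K\times G$ with diagonal action). The only difference is one of completeness: the paper merely sketches the application, whereas you verify the acyclicity condition explicitly via colimit-preservation of $U$ and the anodyne-closure properties, and you correctly observe that finiteness of $G$ plays no role in this existence statement.
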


\begin{proof}
	The details on this result can be found in the mentioned references, but we sketch the main idea.
	The proof is an application of the classical transfer principle for cofibrantly generated model category structures \cite[Thm. 11.3.2]{Hir09} to the pair of adjoint functors $(-)\times G : \sSet \leftrightarrows \GsSet : U$. 
	Here, $(-)\times G$ is the left adjoint, mapping a simplicial set $X$ to the cartesian product $X\times G$, endowed with the diagonal $G$-action considering the trivial $G$-action on $X$, and $U$ is the forgetful functor.
\end{proof}

With the model structure described above, a $G$-simplicial set $Y$ is fibrant if the underlying simplicial set $Y$ is a Kan complex.
From this fact, we immediately conclude the following assertion, 
which will be used later.

\begin{proposition}
	\label{Prop: G Mapping spaces are Kan complexes}
	Let $G$ be a finite group, and $X,Y\in \GsSet$. 
	If $Y$ is fibrant, then the underlying simplicial sets of the mapping spaces $\Map\left(X,Y\right)$ and $\Map_G\left(EG,Y\right)$ are Kan complexes.
\end{proposition}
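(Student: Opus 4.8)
The plan is to handle the two mapping spaces separately. For $\Map(X,Y)$ I would argue directly from the cited result, while for $\Map_G(EG,Y)$ I would translate the horn-filling problem into a lifting problem in the model category $\GsSet$ and solve it using fibrancy of $Y$.

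First, for $\Map(X,Y)$: its set of $n$-simplices is $\sSet\!\left(\Delta^n\times X,Y\right)$, which depends only on the underlying simplicial sets of $X$ and $Y$; the conjugation $G$-action is extra structure discarded upon passing to the underlying simplicial set. Thus the underlying simplicial set of $\Map(X,Y)$ is exactly the ordinary mapping space of the underlying simplicial sets. Since $Y$ is fibrant, its underlying simplicial set is a Kan complex, so by \cite[Prop. 1.17]{Cur71} the mapping space is a Kan complex, settling this case at once.

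Next, for $\Map_G(EG,Y)$, I would show that horns can be filled. Using the exponential law (\ref{Equivariant Tensor-Hom}), with $\Delta^n$, $\Lambda^n_k$ carrying the trivial action, there is a natural bijection $\sSet\!\left(K,\Map_G(EG,Y)\right)\cong \sSet_G\!\left(K\times EG, Y\right)$ for every simplicial set $K$; in particular a horn $\Lambda^n_k\to \Map_G(EG,Y)$ corresponds to a $G$-equivariant map $\Lambda^n_k\times EG\to Y$, and filling it amounts to solving the lifting problem
\[
\begin{tikzcd}
\Lambda^n_k \times EG \arrow[r] \arrow[d, hook] & Y \arrow[d] \\
\Delta^n \times EG \arrow[r] & {*}
\end{tikzcd}
\]
in $\GsSet$. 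Since $Y$ is fibrant, the right-hand map is a fibration, so it suffices to prove that the inclusion $\Lambda^n_k\times EG\hookrightarrow \Delta^n\times EG$ is an acyclic cofibration. The acyclicity is routine: forgetting $G$, this map is the product of the acyclic cofibration $\Lambda^n_k\hookrightarrow \Delta^n$ with a fixed simplicial set, hence an acyclic cofibration of simplicial sets, and weak equivalences in $\GsSet$ are detected on underlying simplicial sets.

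The main obstacle is the cofibration part, namely that $\Lambda^n_k\times EG\hookrightarrow \Delta^n\times EG$ is a cofibration in $\GsSet$. Here I would exploit that the generating cofibrations are the free maps $\partial\Delta^m\times G\hookrightarrow \Delta^m\times G$: because $EG$ has a free $G$-action, $G$ acts freely on the simplices of $\Delta^n\times EG$ lying outside the image of $\Lambda^n_k\times EG$, so the inclusion is a monomorphism that is free on its complement and is therefore built by attaching free $G$-cells. Equivalently, one checks that the tensoring $\sSet\times\GsSet\to\GsSet$, $(K,Z)\mapsto K\times Z$ with trivial action on $K$, is a left Quillen bifunctor, so that the product of the cofibration $\Lambda^n_k\hookrightarrow\Delta^n$ with the cofibrant object $EG$ is a cofibration. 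Granting this, the inclusion is an acyclic cofibration, the lift exists, the chosen horn is filled, and $\Map_G(EG,Y)$ is a Kan complex.
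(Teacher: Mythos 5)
Your proof is correct and follows essentially the same route as the paper, which deduces the proposition ``immediately'' from the transferred model structure on $\GsSet$: the $\Map(X,Y)$ case is exactly the cited \cite[Prop.~1.17]{Cur71} applied to the underlying Kan complex of $Y$, and the $\Map_G(EG,Y)$ case is the standard simplicial-model-category (SM7) argument, which you merely unpack by hand via the adjunction $\sSet\left(K,\Map_G(EG,Y)\right)\cong \sSet_G\left(K\times EG,Y\right)$ and the observation that $\Lambda^n_k\times EG\hookrightarrow \Delta^n\times EG$ is an acyclic cofibration because $EG$ is cofibrant (its $G$-action is free, so the inclusion is a monomorphism with free complement, hence a relative free $G$-cell complex) while acyclicity is detected on underlying simplicial sets. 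The details you supply --- fibrancy of $Y$ giving the fibration $Y\to *$, cofibrancy of $EG$, and the pushout-product compatibility with the generating cofibrations $\partial\Delta^m\times G\hookrightarrow \Delta^m\times G$ --- are precisely the facts the paper's placement of this proposition after the model-structure theorem leaves implicit.
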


\subsection{The Maurer-Cartan $G$-simplicial set}\label{MCSimplicial}

 In this section, 
 we extend the Maurer-Cartan simplicial set functor $\MC: \mathsf{s \widehat L_\infty} \to \mathsf{sSet}$ to the $G$-equivariant setting, producing a $G$-simplicial set out of a complete shifted  $L_\infty$-algebra $L$ endowed with a $G$-action, 
 $$\MC: \GL \to \GsSet.$$ 
 We explain the involved categories of shifted $L_\infty$ algebras in this section. The only novelty here is regarding $G$-actions, so our brief explanations are meant mainly to set up the notation. 
 An up-to-date reference for the functor $\MC$ is for example \cite{Rog20}.  
 We chose to work with \emph{shifted} $L_\infty$-algebras for two reasons. First, the sign convention is simpler, and second, several algebraic structures of Lie type appear with a natural shift in degree. Two examples of naturally shifted structures are the Whitehead product on homotopy groups and the Gerstenhaber bracket on Hochschild cohomology. It is well-known that desuspending the underlying chain complex of a shifted $L_\infty$ algebra gives a (traditional) $L_\infty$ algebra.  
 
 \medskip 

A \emph{shifted $L_\infty$ algebra}, or $sL_\infty$ algebra, is an algebra over $\Omega\mathsf{Com}^\vee$,  the cobar construction of the cooperad dual to the commutative operad. Equivalently, it is a graded $\Q$-vector space $L=\left\{L_n\right\}_{n\in \Z}$ together with  degree $-1$ (graded) symmetric linear maps $\ell_i:L^{\otimes i}\to L$, for $i\geq 1$, satisfying the \emph{generalized Jacobi identities} for every $n\geq 1$: 
\begin{align*}
 \sum_{k=1}^{n}\sum_{\sigma\in Sh(k,n-k)} (-1)^{\varepsilon(\sigma,x_1,...,x_n)} \ell_{n-k+1}\left(\ell_k\left(x_{\sigma (1)},...,x_{\sigma (k)}\right),x_{\sigma(k+1)},...,x_{\sigma(n)}\right) = 0.
\end{align*}
Here, $Sh(k,n-k)$ are the $(k,n-k)$ shuffles: 
Those permutations $\sigma$ of $n$ elements such that
\begin{center}
$\sigma(1)<\cdots < \sigma(k)$ \quad and \quad $ \sigma(k+1)<\cdots < \sigma(n),$
\end{center}  and the following sign is taken over all inversions $i<j$ of $S_n:$ $$\varepsilon(\sigma,x_1,...,x_n) = \prod_{i<j} |x_i||x_j|.$$  
Furthermore, an $sL_\infty$ algebra structure on $L$ is also equivalent to a degree $-1$ codifferential $\delta$ on the reduced (i.e. non-counital) cofree conilpotent cocommutative  coalgebra $\bar SL$ generated by $L$. This is done by identifying the homogeneous  component $\delta_n$ of wordlength $n$ of $\delta$ with the coderivation induced by $\ell_n$. From this point of view, the equation $\delta^2 = 0$ corresponds to the generalized Jacobi identities. 
We typically write just $L$ for an $sL_\infty$ algebra, 
omitting the higher structure maps from the notation.

\medskip

All the definitions above give equivalent objects, 
but there
are different notions of morphisms. 
An \emph{$sL_\infty$ morphism} $F:L\to L'$ is a dg coalgebra map between the corresponding dg coalgebras, $$F : \left(\bar SL,\delta\right) \to \left(\bar SL',\delta'\right).$$ 
For $k\geq 1$, denote $S^kL$  the word-length $k$ elements of $\bar SL$, i.e. 
the space of coinvariants $\left(L^{\otimes k}\right)_{S_k}$ under the usual $S_k$-action on $L^{\otimes k}$.  Since a morphism to a cofree coalgebra is determined by its corestriction, 
there is a sequence of symmetric maps arising from $F$,
$$f_k : L^{\otimes k} \to L', \quad k\geq 1,$$
satisfying a family of identities that correspond to the single identity $F\delta=\delta'F.$ 
The $sL_\infty$ morphisms are also called $\infty$-morphisms or shifted $L_\infty$-morphisms.

A different kind of morphisms arises from the fact that   $sL_\infty$ algebras are algebras over an operad. These are called \emph{strict} $sL_\infty$ morphisms.  These morphisms are a particular and much simpler form of $\infty$-morphisms.
They are given by a single degree $0$ linear map $f:L\to L'$ that commutes with all higher brackets, i.e. $$f\ell_n\left(x_1,...,x_n\right) = \ell_n\left(f(x_1),...,f(x_n)\right).$$
These morphisms are precisely the $\infty$-morphisms all of whose components $f_n$ vanish for $n\geq 2$.
The category of  $sL_\infty$ algebras with $\infty$-morphisms is denoted by  $\mathsf{s L_\infty}$. The corresponding category with strict morphisms is denoted $\mathsf{s L_\infty^{St}}$. 
There is an obvious embedding $\mathsf{s L_\infty^{St}} \hookrightarrow \mathsf{s L_\infty}.$

\medskip

To obtain a satisfactory homotopy theory for $sL_\infty$ algebras, 
we must consider complete filtered objects and morphisms.
An  $sL_\infty$ algebra $L$ is \emph{filtered} if it comes equipped with a descending filtration
$$L=F^1 L \supseteq F^2 L \supseteq \cdots $$   such that for all $n\geq 1$, and $i_1,...,i_n$, 
\begin{equation}\label{Filtration}
	\ell_n\left(F^{i_1}L,...,F^{i_n}L\right)\subseteq F^{i_1+\cdots +i_n}L.
\end{equation}
In general, the particular filtration we equip an $L_\infty$ algebra with is not so important; 
the crucial fact is the topology induced by the filtration.
There is a canonical filtration, called the \emph{lower central series}, which is defined for all $sL_\infty$ algebras.
It is given by the intersection of all possible descending filtrations as defined above, 
whose elements in filtration degree $p$ are those elements of $L$ that can be written by bracketing in any form at least $p$ elements of $L$.  
An $sL_\infty$ algebra $L$ is \emph{nilpotent} if its lower central series eventually vanishes, i.e., if there is some $p$ for which $F^pL = 0$; 
and \emph{degree-wise nilpotent} if its lower central series eventually vanishes \emph{degree-wise}, i.e., if for every $k$ there is some $p=p(k)$ such that $\left(F^pL\right)_k = 0$. 
An $sL_\infty$ morphism $f:L\to L'$ is \emph{filtered} if $$f_k\left(F^{i_1}L,...,F^{i_k}L\right) \subseteq 
F^{i_1 + \cdots + i_ k}L'.$$

A filtered $sL_\infty$ algebra $L$ is \emph{complete} if  the canonical map to its completion is an isomorphism, $$L\xrightarrow{\cong} \widehat L = \varprojlim  L/F^nL.$$ 

The category of complete filtered $sL_\infty$ algebras together with filtered $\infty$-morphisms is denoted $\mathsf{s \widehat L_\infty}$. The category of complete filtered  $sL_\infty$ algebras with filtered strict $L_\infty$ morphisms is denoted $\mathsf{s \widehat L_\infty^{St}}$.
There are obvious embeddings 
\begin{center}
	\begin{tikzcd}
	\mathsf{s  L_\infty^{St}} \arrow[r, hook]                         & \mathsf{s  L_\infty}                         \\
	\mathsf{s \widehat L_\infty^{St}} \arrow[r, hook] \arrow[u, hook] & \mathsf{s \widehat L_\infty} \arrow[u, hook]
\end{tikzcd}
\end{center}

Every   $sL_\infty$ algebra 
in this paper will be assumed to be 
filtered and complete, and every $sL_\infty$ morphism will be assumed to be filtered.
That is, we work in the category $\mathsf{s \widehat L_\infty}$.

\medskip

Let $G$ be a group. 
We denote by $G\textrm{-}\mathsf{s\widehat L_\infty}$ the category of complete $sL_\infty$-algebras $L=\left(L,\{ \ell_n\}\right)$ equipped with linear $G$-actions $G\times L_n \to L_n$ compatible with the brackets, 
\begin{equation}\label{ActionOnBrackets}
	g \cdot \ell_n\left(x_1,...,x_n\right) = \ell_n \left(gx_1,...,gx_n\right), 
\end{equation} and with the filtration, $G\cdot F^pL\subseteq F^pL$,  together with filtered $sL_\infty$-morphisms that commute with the $G$-action. That is, equation $(\ref{Filtration})$ holds, and if $f=\left\{f_n:L^{\otimes n}\to L' \right\}$ is an $sL_\infty$-morphism, then for every $n\geq 1$ and $g\in G$,
\begin{equation}\label{ActionOnMorphisms}
	g \cdot f_n\left(x_1,...,x_n\right) = f_n \left(gx_1 ,..., gx_n\right).
\end{equation} Such a $\GL$-algebra is the same thing as a complete  $sL_\infty$-algebra $L$ together with a group homomorphism $G\to \operatorname{Aut}_{\mathsf{s \widehat L_\infty}}\left(L\right)$.
Sometimes, we will refer to these objects by (complete) $G$-$sL_\infty$-algebras. So defined, such $G$-$sL_\infty$-algebras are the inverse limit of a tower of nilpotent $G$-$sL_\infty$-algebras, 
whose fibers at each level are abelian $G$-$sL_\infty$ algebras. 

\medskip

The category $\GCDGA$ has objects the commutative differential graded algebras (CDGA's) 
endowed with a linear left $G$-action compatible with the product and differential.
That is, if $\left(A,d\right) \in \GCDGA$, 
then $G$ acts linearly on $A$ as a graded vector space so that for all $a,b\in A$ and $g\in G$,
\begin{equation*}
	g\cdot (ab) = (g\cdot a)(g\cdot b) \quad \textrm{and} \quad g\cdot (da) = d\left(g\cdot a\right).
\end{equation*}
Here, we are denoting by juxtaposition $ab$ the product of the elements $a,b\in A$.
The morphisms in $\GCDGA$ are the equivariant CDGA maps, i.e, CDGA maps $f:A\to B$ such that $f(g\cdot x)= g\cdot f(x)$ for all $g\in G$ and $x\in A$.

\medskip

If $L\in \GL$ and $A\in \GCDGA$, then the complete $sL_\infty$ algebra  
$$L\widehat \otimes A = \varprojlim (L/F^nL)\otimes A$$ carries the natural $G$-action induced as follows. 
First, we put on each quotient $L/F^nL$
the $G$-action induced from the $G$-action of $L$. 
Then, we put the diagonal action on each tensor  $L/F^nL\otimes A$.
Finally, we consider on $L\widehat \otimes A$
the induced $G$-action on the inverse limit.

Recall that the differential and higher brackets on $L\widehat \otimes A$ are given by
\begin{align*}
	\ell_1\left(x\otimes a\right) &= \ell_1(x)\otimes a + (-1)^{|x|}x\otimes d_Aa \\[0.2cm]
	\ell_k\left( x_1\otimes a_1,...,x_k\otimes a_k \right) &= (-1)^\varepsilon \ell_k\left(x_1,...,x_k\right)\otimes a_1\cdots a_k
\end{align*}
where $\varepsilon=\sum_{i>j} |x_i||a_j|$. 
Denote by  $\Omega_n=A_{PL}(\Delta^n)$ the CDGA of the Sullivan's polynomial de Rham forms on the standard simplices, and by 
$\Omega_\bullet = \left\{\Omega_n\right\}_{n\geq 0}$ the simplicial CDGA obtained by putting all those CDGA's together, taking into account their induced face and degeneracy maps \cite[Chp. 10]{Yve12}.
Endow these CDGA's with the trivial $G$-action.
Then,
each $L\widehat{\otimes }\Omega_n$ 
is a complete $\GL$-algebra, and  $L\widehat{\otimes }\Omega_\bullet$ is a simplicial  $\GL$-algebra. 

\medskip

The \emph{Maurer-Cartan set} of a complete $sL_\infty$-algebra $L$, denoted $MC(L)$, are the degree $0$ elements $z$ satisfying the equation $$\sum_{k\geq 1} \frac{1}{k!} \ell_k\left(z,...,z\right)=0.$$ The \emph{Maurer-Cartan simplicial set} $\MC(L)$ of an $L_\infty$-algebra $L$ has $n$-simplices $$\operatorname{MC}_n\left(L\right) = MC\left(L\widehat{\otimes }\Omega_n \right).$$
The face and degeneracy maps are induced by those of the simplicial CDGA $\Omega_\bullet.$ 
The simplicial set $\MC(L)$ is always a Kan complex \cite{Get09}. 
The functor $$\MC:\GL \to \GsSet $$ is defined as the usual Maurer-Cartan functor on objects, 
and it carries the $G$-action $$g\cdot z = \sum_i (gx_i)\otimes a_i,$$ for an $n$-simplex $z=\sum_i x_i\otimes a_i \in MC\left(L\widehat{\otimes} \Omega_n\right)$ and $g\in G$. Equation (\ref{ActionOnBrackets}) ensures that $g\cdot z\in \MC(L).$ 
With this definition on objects, the $G$-action on $L$ extends to an equivariant version of $\MC$ on morphisms. 
That is, $\MC\left(f\right)$ is given on morphisms as usual, see \cite[Equation (5.42))]{Rog20}.
The equivariance of $\MC(f)$ follows from equation (\ref{ActionOnMorphisms}).

\medskip
In the non-equivariant case, the homotopy groups of $\MC(L)$ were computed in \cite[Theorem 1.1]{Ber15}). There, it is shown that for a complete $sL_\infty$-algebra $L$, the homotopy groups of $\MC(L)$ based at a Maurer-Cartan element $\tau$ are given by 
\[
\pi_{*}\left(\MC(L),\tau\right)\cong H_{*}\left(L^{\tau}\right),
\]
where $L^{\tau}$ denotes the $sL_\infty$-algebra $L$ twisted and truncated  by the Maurer-Cartan element $\tau$. The group structure on $H_1\left(L^{\tau}\right)$ is given by the Baker-Campbell-Hausdorff formula. 
This result will be used later on.
Since we work with $sL_\infty$ algebras concentrated
in positive degrees, (which corresponds to a non-negatively graded traditional $L_\infty$ algebra) there is a canonical Maurer-Cartan element, namely the zero element 
in degree $0$. 
We fix once and for all this base point in all homotopy groups appearing in this paper. 

\medskip

For a given linear $G$-action on a graded vector space $V$, 
the graded subspace of $G$-invariants is denoted by $V^G$. 
If furthermore $V$ is a chain complex, 
then the homology $H_*(V)$ inherits a natural $G$-action by the formula $g\cdot [x] = [g\cdot x]$.
The following well-known result will be used at several places.

\begin{proposition}\label{Proposition1}
	Let $G$ be a finite group and $(V,d)$ a chain complex over $\Q$. 
	Then there is an isomorphism $H_*(V^G)\cong H_*(V)^G.$
\end{proposition}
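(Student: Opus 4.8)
The plan is to exploit the averaging (Reynolds) operator, whose existence as a chain map hinges precisely on the two hypotheses that $G$ is finite and that we work over $\Q$, so that $|G|$ is invertible. I would define $\pi \colon V \to V$ by $\pi(v) = \frac{1}{|G|}\sum_{g\in G} g\cdot v$. Since each $g$ acts by a chain automorphism of $V$ and $\pi$ is a $\Q$-linear combination of these, $\pi$ is itself a chain map. A direct check shows that $\pi$ is idempotent, that $\Img(\pi) = V^G$, and that $\pi$ restricts to the identity on $V^G$; in particular $\pi$ is a chain-level projection onto the subcomplex $V^G$.

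First I would use this idempotent to split $V$. Because $\pi$ is an idempotent chain endomorphism, both $V^G = \Img(\pi)$ and $\Ker(\pi)$ are subcomplexes, and $V = V^G \oplus \Ker(\pi)$ as chain complexes (indeed $v = \pi(v) + (v-\pi v)$ with $v - \pi v \in \Ker\pi$, and the two summands intersect trivially). Homology preserves finite direct sums, so $H_*(V) \cong H_*(V^G) \oplus H_*(\Ker \pi)$; in particular the map $\iota_* \colon H_*(V^G) \to H_*(V)$ induced by the inclusion $\iota \colon V^G \hookrightarrow V$ is injective, realizing $H_*(V^G)$ as a direct summand of $H_*(V)$.

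It then remains to identify the image of $\iota_*$ with $H_*(V)^G$. Recall that $G$ acts on $H_*(V)$ by $g\cdot[x] = [g\cdot x]$; consequently the chain map $\pi$ induces on homology exactly the averaging operator $\bar\pi \colon H_*(V)\to H_*(V)$, $\bar\pi([x]) = \frac{1}{|G|}\sum_{g} g\cdot[x]$, whose image is $H_*(V)^G$ and which fixes $H_*(V)^G$ pointwise. Any class in $\Img(\iota_*)$ is represented by a cycle lying in $V^G$, hence is $G$-invariant, so $\Img(\iota_*)\subseteq H_*(V)^G$. Conversely, if $[x]\in H_*(V)^G$ then $[x] = \bar\pi([x]) = [\pi(x)]$ with $\pi(x)\in V^G$, so $[x]\in \Img(\iota_*)$. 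Therefore $\iota_*$ is an isomorphism onto $H_*(V)^G$, which yields the desired isomorphism $H_*(V^G)\cong H_*(V)^G$, manifestly natural in the $G$-chain complex $V$.

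As for the main obstacle: there is no serious difficulty here, the entire content being the availability of the averaging operator in characteristic zero. The one point deserving care is the compatibility $H_*(\pi) = \bar\pi$, that is, that averaging on chains induces averaging on homology and that every invariant homology class is represented by an invariant cycle; both assertions rely on the factor $\frac{1}{|G|}$ and would fail in general over a field whose characteristic divides $|G|$.
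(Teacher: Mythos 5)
Your proof is correct, and it rests on the same essential ingredient as the paper's — the averaging operator, available because $|G|$ is invertible over $\Q$ — but it is organized differently. The paper never splits $V$: it works only with the inclusion $i\colon V^G\hookrightarrow V$, asserts that the image of $i_*$ is $H_*(V)^G$, and proves injectivity by a single pointwise averaging: if a cycle $x\in V^G$ bounds in $V$, say $da=x$, then $\bar a=\frac{1}{|G|}\sum_{g\in G}g\cdot a$ lies in $V^G$ and satisfies $d\bar a=\frac{1}{|G|}\sum_{g\in G}g\cdot(da)=x$ by invariance of $x$, so $x$ bounds already in $V^G$. You instead promote the averaging to a chain-level idempotent $\pi$ and obtain the decomposition $V=V^G\oplus\Ker\pi$ of complexes, deducing injectivity of $\iota_*$ from the fact that homology preserves direct sums. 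Your route buys a bit more: it exhibits $H_*(V^G)$ as a natural direct summand of $H_*(V)$, and it spells out the identification of the image with the invariants (every invariant class is represented by an invariant cycle, via $[x]=\bigl[\pi(x)\bigr]$) — a step the paper's proof states without detail. The paper's version is more economical, needing only one averaged element rather than the full splitting; both arguments fail, as you note, exactly when the characteristic divides $|G|$.
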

\begin{proof}
	The inclusion $V^G \hookrightarrow V$ is a chain map that induces a map 
	$i_* : H_*\left(V^G\right)\to H_*(V)$ in homology.
	The image of $i_*$ is $H_*(V)^G$.
	Furthermore, $i_*$ is injective, because if $x\in \Ker(i_*)$ and $a\in V$ is such that $da=x$, then the average $\bar a:= \frac{1}{|G|}\sum_{g\in G} g\cdot a$ is an element of $V^G$ which bounds $x$. 
\end{proof}

Next, we briefly explain the connection of $\MC$ with the $G$-equivariant Sullivan realization functor described in Goyo's thesis \cite{Goy89}.
This connection explains in a precise sense how the results in this paper extend those of Goyo.
In \textit{loc. cit.}, 
the classical adjoint pair between simplicial sets and CDGA's \cite{Bou76} was extended to include a finite group action:
\begin{equation}\label{Ecu:Adjoint Sullivan}
	\Sull \ :\  \GsSet\leftrightarrows \ \GCDGA\ : \langle - \rangle.
\end{equation}
Actually, in \cite{Goy89},
the adjoint pair above is restricted to finite type 1-reduced simplicial sets
and 
finite type simply-connected  CDGA's.
Now, assume $L=L_{\geq 1}$ is a finite type positively graded $sL_\infty$-algebra. Then, via the cochains functor,
	 $L$ uniquely corresponds to a CDGA $\mathcal C^*\left(L\right)=\left(\Lambda V,d\right)$, which is a Sullivan algebra with $V=L^\vee$, and whose differential $d$ has homogeneous component $d_k$ corresponding to the higher bracket $\ell_k$. 
The $G$-action passes through this correspondence via $$\left(g \cdot x\right)^\vee = g^{-1} \cdot \left(x\right)^\vee.$$ Thus, $\left(\Lambda V,d\right)$ is a $\GCDGA.$ 
Combining our discussion so far with \cite[Theorem 2.3]{Ber15},
one can give the following much more precise statement.

\begin{proposition}
	Let $G$ be an arbitrary group.
	Then the Chevalley-Eilenberg cochains functor $\mathcal C^*$ induces an isomorphism of categories 
	\begin{center}
\begin{tikzcd}
	{\left(\left\{\substack{\textrm{finite type $G$-$sL_\infty$ algebras }\\ L=L_{\geq 1}}\right\}, \substack{\infty-\textrm{morphisms }\\ \textrm{compatible with $G$}} \right)} \arrow[rr,"\cong"] &  & {\left(\left\{
		\substack{\textrm{quasi-free finite type $G$-CDGA's}\\ 
		(\Lambda V,d), V=V^{\geq 1}}\right\}, {\tiny{\textrm{$G$-CDGA maps}}}\right).}
\end{tikzcd}
	\end{center} Furthermore, in the correspondence above, $L$ is degree-wise nilpotent if, and only if, $\mathcal C^*(L)$ is a Sullivan algebra.
\end{proposition}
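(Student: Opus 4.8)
The plan is to bootstrap from the non-equivariant statement, which is exactly \cite[Theorem 2.3]{Ber15}: the Chevalley--Eilenberg cochains functor $\mathcal{C}^*$ restricts to a (contravariant) isomorphism of categories between finite type $sL_\infty$ algebras concentrated in positive degrees, with $\infty$-morphisms, and quasi-free finite type CDGA's $(\Lambda V,d)$ with $V=V^{\geq 1}$, with CDGA maps. On objects it sends $L$ to $(\Lambda V,d)$ with $V=L^\vee$, the homogeneous component $d_k$ of the differential being linearly dual to the bracket $\ell_k$; the finite type hypothesis guarantees $V^\vee = L$ degreewise, so this is an honest bijection on objects with strict inverse $(\Lambda V,d)\mapsto V^\vee$. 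Thus I only need to promote this isomorphism to the two categories of $G$-objects and then verify the nilpotency clause, which is orthogonal to the $G$-action.

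First I would record the $G$-action functorially. A $G$-action on an object $X$ of a category $\mathcal{C}$ is the same datum as a functor from the delooping groupoid $\mathbf{B}G$ (one object, morphisms $G$) to $\mathcal{C}$ sending the object to $X$, i.e.\ a homomorphism $G\to \Aut_{\mathcal C}(X)$; a $G$-equivariant morphism is then a morphism in the functor category $\mathcal{C}^{\mathbf{B}G}$. Any isomorphism of categories $F\colon \mathcal{C}\to\mathcal{D}$ induces by post-composition an isomorphism $\mathcal{C}^{\mathbf{B}G}\cong \mathcal{D}^{\mathbf{B}G}$; when $F$ is contravariant one first lands in $\mathcal{D}^{(\mathbf{B}G)^{\mathrm{op}}}$ and then applies the canonical isomorphism $(\mathbf{B}G)^{\mathrm{op}}\cong \mathbf{B}G$ given by inversion $g\mapsto g^{-1}$. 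Tracing this composite for $F=\mathcal{C}^*$ reproduces precisely the contragredient action on $V=L^\vee$ recorded by the formula $(g\cdot x)^\vee = g^{-1}\cdot x^\vee$, shows it is a genuine left action, and carries $\Aut_{\mathsf{s\widehat L_\infty}}(L)$ isomorphically onto the $G$-CDGA automorphisms of $\mathcal{C}^*(L)$. For morphisms I would simply note that $\mathcal{C}^*$ is already a bijection on the underlying Hom-sets, and that the $G$-equivariance condition (\ref{ActionOnMorphisms}) of an $\infty$-morphism $f=\{f_n\}$ is exactly the condition that $f$ be fixed under the conjugation action on $\Hom$; the Hom-set bijection identifies this with the fixed locus on the CDGA side, i.e.\ with $G$-equivariance of the dual CDGA map. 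This yields the asserted isomorphism of categories of $G$-objects (the reversal coming from linear duality being corrected on actions and morphisms by the inversion $g\mapsto g^{-1}$).

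For the nilpotency clause the $G$-action plays no role, since being degree-wise nilpotent and being a Sullivan algebra are both properties of the underlying non-equivariant object. The key step is to match the lower central series of $L$ with a Sullivan filtration of $V=L^\vee$: setting $V(p)=(L/F^{p+1}L)^\vee\subseteq V$ gives an increasing filtration, and the bracket-to-differential duality turns the defining inclusions (\ref{Filtration}) of the lower central series into the inclusions $d\,V(p)\subseteq \Lambda V(p-1)$ characterizing a Sullivan filtration. Under this dictionary, degree-wise nilpotency of $L$ — that for each $k$ some $(F^pL)_k=0$ — says exactly that in each degree the filtration $V(p)$ exhausts $V^k$ after finitely many steps, which is precisely the existence of a Sullivan filtration, i.e.\ that $(\Lambda V,d)$ is a Sullivan algebra. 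The converse reads the same implications backwards.

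I expect the main obstacle to be bookkeeping the variance: ensuring that the contravariance of linear duality, the inversion $g\mapsto g^{-1}$, and the left/right handedness of the actions all align so that a left $G$-$sL_\infty$ algebra is sent to a left $G$-CDGA and equivariant morphisms are preserved. The delooping-groupoid formulation above is designed to make this automatic. The identification of the lower central series with a Sullivan filtration is then routine once the dual filtration is set up, with the only delicate point being the degree-$1$ part of $V$, which is exactly where a quasi-free CDGA can fail to admit a Sullivan filtration.
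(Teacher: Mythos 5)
Your equivariant bookkeeping is correct and is in substance the paper's own argument: the paper gives no formal proof, only the remark that \cite[Theorem 2.3]{Ber15} supplies the non-equivariant isomorphism of categories and that the $G$-action passes through the duality via the contragredient formula $(g\cdot x)^\vee = g^{-1}\cdot x^\vee$. Your formulation via functor categories $\mathcal{C}^{\mathbf{B}G}$, with the inversion $g\mapsto g^{-1}$ correcting the variance of linear duality and the Hom-set bijection identifying the fixed loci of the conjugation actions on morphisms, is a clean and accurate way of making that remark precise, and the finite-type hypothesis does exactly the work you assign it.

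There is, however, a genuine flaw in your treatment of the nilpotency clause. The claimed dualization is false as stated: for $V(p)=(L/F^{p+1}L)^\vee$ the filtration axiom gives $\ell_1\left(F^pL\right)\subseteq F^pL$ only, so the linear part of the differential satisfies merely $d_1 V(p)\subseteq V(p)$, not $d_1 V(p)\subseteq V(p-1)$; hence $d\,V(p)\subseteq \Lambda V(p-1)$ fails in general. Concretely, take $L$ abelian two-dimensional with $\ell_1 u = v$: the lower central series has $F^2L=0$, so your filtration is $V(0)=0\subseteq V(1)=V$, and the Sullivan condition for it would force $d=0$, yet $\mathcal C^*(L)$ is the contractible algebra $\left(\Lambda(x,y),\, dx=y\right)$, which is certainly Sullivan. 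The repair is standard but must be done: interleave each step with $U(p) = V(p-1) + \left\{x\in V(p) \mid d_1x\in V(p-1)\right\}$, and use $d_1^2=0$ (the word-length-one component of $d^2=0$) to check $d\,V(p)\subseteq \Lambda U(p)$ and $d\,U(p)\subseteq \Lambda V(p-1)$; degreewise nilpotency then gives degreewise exhaustion of the refined filtration. Note also that your $d_1$ problem occurs in every degree, not just in degree $1$ of $V$ as you suggest. Finally, the converse does not "read the same implications backwards": a Sullivan algebra comes equipped with \emph{some} Sullivan filtration, which need not be the annihilator filtration of the lower central series, so one must dualize an arbitrary Sullivan filtration $V(0)\subseteq V(1)\subseteq\cdots$ to a tower of annihilators $L_{(p)}=\operatorname{Ann}V(p)$, deduce $\ell_k\left(L_{(p-1)},\dots,L_{(p-1)}\right)\subseteq L_{(p)}$ for $k\geq 2$ from $d_kV(p)\subseteq\Lambda^k V(p-1)$, and run an induction over word length and degree to conclude that the lower central series vanishes degreewise. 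Alternatively, and this is what the paper actually does, both directions of this clause can simply be imported from \cite[Theorem 2.3]{Ber15}, which already contains the degreewise-nilpotent/Sullivan equivalence.
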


The Maurer-Cartan simplicial set functor $\MC$ is naturally equivalent to Sullivan's realization functor in the $1$-reduced finite type case 
\cite[Proposition 1.1]{Get09}, 
and it can be seen that the $G$-action passes through. Thus, there is a natural isomorphism of simplicial sets which is 
$G$-equivariant: $$\MC\left(L\right)\cong \left\langle \mathcal C^*\left(L\right)\right\rangle.$$ 
Therefore, 
the results in this paper extend those of Goyo, 
since we work with a broader class of spaces: 
connected, not necessarily of finite rational type,
and whose fundamental group is not necessarily trivial - it must be of the form $H_1\left(L\right)$ for $L$ some complete shifted $L_\infty$ algebra endowed with the Baker-Campbell-Hausdorff group structure.

\section{The Sullivan conjecture for the Maurer-Cartan $G$-simplicial set}\label{SullivanConjecture}

For a $G$-space or $G$-simplicial set $X$, determining whether the natural  inclusion $X^G\hookrightarrow X^{hG}$ is a homotopy equivalence after completing at a given prime $p$  is a difficult problem. Its answer in the affirmative goes under the name of the \emph{generalized Sullivan conjecture}, see \cite{Mil84}. In this section, we prove that whenever $G$ is a finite group, $p=0$ and $X$ is of the form $\MC(L)$ for a complete $G$-$sL_\infty$-algebra concentrated in positive degrees, the generalized Sullivan conjecture holds.
The main result is Theorem \ref{HomEq}, corresponding to Theorem \ref{Teo:Main} in the introduction. 

\bigskip

Let $L\in \GL$. The \emph{fixed points of $L$} are 
\begin{equation*}
L^G = \left\{ x \in L \mid gx = x  \ \textrm{ for all } g \in G \ \right\} \subseteq L.
\end{equation*} 
It follows from Equation (\ref{ActionOnBrackets})  that $L^G$ is an $sL_\infty$-subalgebra of $L$. 
Furthermore, $L^G$ inherits the natural filtration $$F^p\left(L^G\right) := F^pL \cap L^G = \left(F^pL\right)^G.$$
With respect to this filtration, $L^G$ is complete, and the inclusion $L^G \hookrightarrow L$ is a filtered morphism.
Recall from Proposition \ref{Prop: G Mapping spaces are Kan complexes}
that if $X,Y\in \GsSet$, 
and $Y$ is a Kan complex, then the mapping spaces $\Map\left(X,Y\right)$ and $\Map_G\left(X,Y\right)$ are Kan complexes. 
Thus, for any complete $G$-$sL_\infty$-algebra $L$, the simplicial sets $\MC(L)^G$ and $\MC(L)^{hG}$ are Kan complexes.

\begin{remark}\label{Remark1}
	The Maurer-Cartan $G$-simplicial set functor commutes with $G$-fixed points.
	That is, there is a commutative diagram of functors 
	\begin{center}
		\begin{tikzcd}
			\GL \arrow[r, "(-)^{G}"] \arrow[d, "\MC"'] & \mathsf{s \widehat L_\infty} \arrow[d, "\MC"] \\
			\GsSet \arrow[r, "(-)^{G}"]                 & \mathsf{sSet}                                
		\end{tikzcd}
	\end{center} 
Thus,
$\MC(L^G)= \MC(L)^{G}.$
\end{remark}

The main result is the following. 

\begin{theorem}\label{HomEq}
		Let $G$ be a finite group and  $L=L_{\geq 1}$  a positively graded complete $G$-$sL_\infty$-algebra. Then, the natural inclusion 
\begin{equation*}
	\MC \left(L\right)^G \hookrightarrow \MC\left(L\right)^{hG}
\end{equation*} is a homotopy equivalence of Kan complexes. 
\end{theorem}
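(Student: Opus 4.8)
The plan is to prove that the inclusion is a weak equivalence; as both sides are Kan complexes this is equivalent to it being a homotopy equivalence. Since $L=L_{\geq 1}$, the only degree-zero element of $L\widehat\otimes\Omega_0=L$ is $0$, so $\MC(L)$ is reduced and the canonical Maurer-Cartan element $0$ is a $G$-fixed basepoint, common to $\MC(L)^G$ and $\MC(L)^{hG}$. Using Remark~\ref{Remark1}, Berglund's identification $\pi_*(\MC(L),0)\cong H_*(L)$, and the exactness of invariants over $\Q$ from Proposition~\ref{Proposition1}, the fixed-point side is immediately understood: $\pi_*(\MC(L)^G)=\pi_*(\MC(L^G))\cong H_*(L^G)\cong H_*(L)^G$. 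The entire task is therefore to identify $\pi_*(\MC(L)^{hG})$ with the same object via the inclusion, and I would do this by reducing along the lower central series to an abelian (linear) base case.

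Write $L=\varprojlim_n L/F^nL$ as the inverse limit of the tower of nilpotent $G$-$sL_\infty$-algebras, with $G$-equivariant strict surjections $L/F^{n+1}L\twoheadrightarrow L/F^nL$ whose kernels $F^nL/F^{n+1}L$ are abelian. The functor $\MC$ converts this into a tower of principal Kan fibrations with $\MC(L)=\varprojlim_n\MC(L/F^nL)$ and fibers $\MC(F^nL/F^{n+1}L)$. Since $(-)^{hG}=\Map_G(EG,-)$ is a right adjoint, it preserves fibrations and limits, so $\MC(L)^{hG}=\varprojlim_n\MC(L/F^nL)^{hG}$ is again a tower of fibrations, with fibers $\MC(F^nL/F^{n+1}L)^{hG}$. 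On the fixed-point side, Remark~\ref{Remark1} gives $\MC(L/F^nL)^G=\MC((L/F^nL)^G)$, and because invariants are exact over $\Q[G]$ they carry the algebra tower to the tower $\{(L/F^nL)^G\}$ with abelian kernels $(F^nL/F^{n+1}L)^G$; applying $\MC$ yields a tower of fibrations with fibers $\MC(F^nL/F^{n+1}L)^G$ and limit $\MC(L^G)=\MC(L)^G$. The inclusion is now a map of towers of fibrations, so by the Milnor $\varprojlim^1$ sequence it suffices to treat each finite stage; and by induction up the tower together with the five lemma applied to the two long exact homotopy sequences, it suffices to prove that on every abelian fiber the inclusion $\MC(V)^G\hookrightarrow\MC(V)^{hG}$ is a weak equivalence.

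For abelian $V$ the Maurer-Cartan equation is linear, so $\MC(V)$ is the simplicial $\Q$-vector space $n\mapsto Z_0(V\widehat\otimes\Omega_n)$, i.e.\ a simplicial $\Q[G]$-module. Here the finiteness of $G$ and characteristic zero enter through Maschke's theorem: $\Q[G]$ is semisimple, so $(-)^G=\Hom_{\Q[G]}(\Q,-)$ is exact and $\Q$ is $\Q[G]$-projective. Through Dold-Kan, $\MC(V)^{hG}=\Map_G(EG,\MC(V))$ is computed by $\Hom_{\Q[G]}(N_*EG,N_*\MC(V))$, and the augmentation $N_*EG\xrightarrow{\simeq}\Q$ is a resolution by free $\Q[G]$-modules that splits by semisimplicity; hence this complex is quasi-isomorphic to $\Hom_{\Q[G]}(\Q,N_*\MC(V))=(N_*\MC(V))^G$, the comparison map being precisely the one induced by the inclusion. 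Equivalently, in the spirit of Proposition~\ref{Proposition1}, one splits $V=V^G\oplus\widetilde V$ as $G$-chain complexes with $\widetilde V^G=0$, so that $\MC(V)=\MC(V^G)\times\MC(\widetilde V)$ $G$-equivariantly with $\MC(V^G)$ carrying the trivial action, and checks that $\MC(\widetilde V)^{hG}\simeq *$ while $\MC(V^G)^{hG}\simeq\MC(V^G)$. Either way the fiberwise inclusion is a weak equivalence, which closes the induction.

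I expect the abelian base case to be the conceptual heart and the main obstacle: it is exactly the (rational, finite-group) Sullivan-type statement that the homotopy fixed points of a rational Eilenberg-MacLane $G$-space collapse onto the ordinary fixed points, and it is powered entirely by the semisimplicity of $\Q[G]$. The tower argument is largely bookkeeping, but the two points needing genuine care are verifying that $\MC$ sends the lower central series tower of $G$-$sL_\infty$-algebras to a tower of $G$-equivariant Kan fibrations with the stated abelian fibers, and that passage to the inverse limit is harmless, namely that a levelwise weak equivalence of towers of fibrations induces a weak equivalence on limits.
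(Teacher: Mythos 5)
Your proposal is correct, and its global architecture coincides with the paper's: both reduce along the lower central series tower $\{L/F^nL\}$ with abelian fibers $F^nL/F^{n+1}L$, compare the fixed-point and homotopy-fixed-point fibration sequences via the five lemma on long exact sequences at the canonical basepoint $0$ (your observation that $\MC(L)$ is reduced plays the role of the paper's fixed choice of basepoint), and pass to the inverse limit using the fact that a levelwise weak equivalence between towers of fibrations induces a weak equivalence on limits — note that this statement, which you also quote and which the paper cites from Goerss--Jardine, is what is actually needed; the Milnor $\varprojlim^1$ sequence you mention first is a red herring here. Where you genuinely diverge is the abelian base case, which is indeed the heart of the matter. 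The paper proves it by an explicit simplicial strong deformation retraction: the retraction $p$ sends $f$ to the average $\frac{1}{|G|}\sum_{g\in G} f\left(\tau,(g,e,\dots,e)\right)$ over the explicit simplicial model of $EG$, and hand-built simplicial homotopies $H$ and $K$, checked face-by-face, give $ip\simeq \id$ equivariantly. You instead note that for abelian $V$ the simplicial set $\MC(V)$ is a simplicial $\Q[G]$-module, translate $\Map_G(EG,-)$ through Dold--Kan into the hom-complex $\Hom_{\Q[G]}\left(N_*EG, N_*\MC(V)\right)$, and use Maschke's theorem — so the augmentation $N_*EG\to\Q$ is a quasi-isomorphism between bounded-below complexes of projectives, hence a chain homotopy equivalence — to collapse derived invariants onto honest invariants; your isotypic splitting $V=V^G\oplus\widetilde V$ is a repackaging of the same semisimplicity. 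Both arguments use finiteness of $G$ and characteristic zero at exactly the same point (averaging is the splitting of $\Q[G]$-modules). The paper's route buys an explicit, purely simplicial equivariant retraction with no recourse to Dold--Kan or Eilenberg--Zilber comparisons; your route is shorter and more conceptual, exhibiting the base case as the statement that derived and underived $G$-invariants of rational simplicial modules agree, at the cost of invoking standard but nontrivial comparison machinery identifying $\pi_*\Map_G(EG,\MC(V))$ with the homology of the hom-complex.
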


\begin{proof}
	We use a standard strategy for complete $sL_\infty$-algebras (see for example \cite{Get09,Dol15,Ber15}). This consists in first proving the statement for abelian $G$-$sL_\infty$-algebras, and then  using induction over  the tower of fibrations of nilpotent $G$-$sL_\infty$-algebras 
	\begin{center}
			\begin{tikzcd}
				\cdots \arrow[r, two heads] & L/F^nL \arrow[r, two heads] & \cdots \arrow[r, two heads] & L/F^3L \arrow[r, two heads] & L/F^2L
			\end{tikzcd}
	\end{center} of which $L$ is the inverse limit.
The induction argument can be started because $L/F^2L$ is always abelian, 
and iterated because each fiber $F^nL/F^{n+1}L$ is abelian too.

\smallskip

{\noindent \em Step 1: The abelian case.} Let $L$ be abelian.
	We give an explicit simplicial  retraction
	\begin{center}
		\begin{tikzcd}
			\MC\left(L\right)^G \arrow[rr, "i"', shift right] &  & \MC\left(L\right)^{hG} \arrow[ll, "p"', shift right] \arrow["K"', loop, distance=2em, in=125, out=55]
		\end{tikzcd}
	\end{center}
where $pi=\id$ and $K : \MC\left(L\right)^{hG}\times \Delta^1 \to \MC\left(L\right)^{hG}$ is a simplicial homotopy between $ip \simeq \id$. 
That is, $K$ is a simplicial map satisfying $$K\left(-,0\right)=ip \quad \textrm{and} \quad K\left(-,1\right)=\id.$$
Define $p:\MC\left(L\right)^{hG} \to \MC \left(L\right)^G$ on  simplices $f\in \mc_n\left(L\right)^{hG}=\sSet_G\left(\Delta^n\times EG, \MC\left(L\right)\right)$ by
	\begin{equation*}
		\begin{tikzcd}[row sep=tiny]
		p\left(f\right)=f^\Sigma: \Delta^n\times * \ar{r}& \MC\left(L\right)\\
		(\tau, x) \ar[mapsto]{r} & \frac{1}{|G|} \sum_{g\in G}f\left(\tau,\left(g,e,...,e\right)\right).
		\end{tikzcd}
	\end{equation*} Since $L$ is abelian and $G$ is finite, the averaged sum above is indeed a Maurer-Cartan element. Thus, $p$ is well-defined. It is straightforward to check that $p$ is simplicial. Denote by $i$ the natural inclusion $\MC \left(L\right)^G \hookrightarrow \MC\left(L\right)^{hG}$ induced by the collapse map $\pi:EG\xrightarrow{\simeq} *$. A straightforward check gives that $pi$ is the identity on $\MC \left(L\right)^G$ and that that $ip$ is the averaged symmetrization on the first variable. 
That is, for any $f\in \mc_n\left(L\right)^{hG}$, $(ip)(f):\Delta^n\times EG \to \MC\left(L\right)$ is given by $$\left(\tau, g_m,...,g_0\right) \mapsto \frac{1}{|G|}\sum_{g\in G} f(\tau, (g,e,...,e)).$$
Next, we define the homotopy $K$.	
To do so, we first show that given any $G$-simplicial map $f:EG \to \MC(L)$, it is $G$-equivariantly homotopic to its averaged symmetrization on the first variable,
	$$f^\Sigma \equiv f^\Sigma \circ \pi: EG \xrightarrow{\simeq} * \xrightarrow{f^\Sigma} \MC(L).$$ 
	Recall that $f^\Sigma:*\to \MC(L)$ is given by mapping any $k$-simplex as follows: $$x \in * \mapsto \frac{1}{|G|}\sum_{\sigma\in G} f\left(\sigma,e,...,e\right).$$ We therefore give a $G$-simplicial map $H=H(f):EG \times \Delta^1  \to \MC(L)$ with $$H\left(-,0\right)=f^\Sigma \quad \textrm{and} \quad H\left(-,1\right)=f.$$ 
Explicitly, given  $\left(\left(g_m,...,g_0\right),[0,...,0,1,...,1]\right)$ an $m$-simplex  of $EG\times \Delta^1$, define:
	\begin{equation*}
		H\left((g_m,...,g_0), [\underbrace{0,{ \ ...\  },0}_k,\underbrace{1,{ \ ...\  },1}_{m+1-k}]\right) = \frac{1}{|G|} \sum_{\sigma \in G} f \left(g_m,...,g_{k},\sigma,e,...,e\right).
	\end{equation*} 
In the formula above, as comes forced by the top and bottom of the cylinder conditions, we understand that for $k=m$, $$ H\left((g_m,...,g_0), [0,...,0]\right) = \frac{1}{|G|} \sum_{\sigma \in G} f \left(\sigma,e,...,e\right), $$ and that for $k=0$, $$ H\left((g_m,...,g_0), [1,...,1]\right) = f \left(g_m,...,g_0\right).$$ 
This is a $G$-simplicial map. 
Indeed, for $d_0$ we have: 
	\begin{align*}
		d_0H((g_m,...,g_0), &[\underbrace{0,{ \ ...\  },0}_k,\underbrace{1,{ \ ...\  },1}_{m+1-k}])=d_0 \left(\frac{1}{|G|} \sum_{\sigma \in G} f \left(g_m,...,g_{k},\sigma,e,...,e\right)\right) \\ &= \frac{1}{|G|}\sum_{\sigma \in G}  d_0f \left(g_m,...,g_{k},\sigma,e,...,e\right)= \frac{1}{|G|}\sum_{\sigma \in G}  fd_0 \left(g_m,...,g_{k},\sigma,e,...,e\right)\\
		& = \frac{1}{|G|}\sum_{\sigma \in G}  f \left(g_m,...,g_{k-1},\sigma,e,...,e\right) = Hd_0\left(\left(g_m,...,g_1\right), [\underbrace{0,{ \ ...\  },0}_k,\underbrace{1,{ \ ...\  },1}_{m+1-k}]\right).
	\end{align*} For $d_i$, with $0<i\leq k$, we similarly have:
	\begin{center}
		\begin{tikzcd}
			{(g_m,...,g_0), [\overbrace{0,{ \ ...\  },0}^k,\overbrace{1,{ \ ...\  },1}^{m+1-k}]} \arrow[rr, "H"] \arrow[d, "d_i"']             &  & {\frac{1}{|G|} \sum_{\sigma \in G} f \left(g_m,...,g_{k},\sigma,e,...,e\right)} \arrow[d, "d_i"] \\
			{(g_m,...,g_{i+1}\cdot g_{i},...,g_0), [\underbrace{0,{ \ ...\  },0}_{k-1},\underbrace{1,{ \ ...\  },1}_{m+1-k}]} \arrow[rr, "H"] &  & {\frac{1}{|G|} \sum_{\sigma \in G} f \left(g_m,...,g_{i+1}\cdot g_{i},...,g_k,\sigma,e,...,e\right)}                 
		\end{tikzcd}
	\end{center} For $d_i$  with $i> k$, we have a diagram like the one above where the only difference is that the down-left corner has $k$ zeroes and $(m-k)$ ones. Similarly, one checks degeneracies. It is immediate that the $G$-action goes through, since $G$ acts on the left.
	Summarizing, we are giving a map
	\begin{align*}
		H:\mathsf{sSet}_G(EG, \MC(L&)) \longrightarrow \mathsf{sSet}_G\left(EG\times \Delta^1, \MC(L)\right)\\
		&\varphi \longmapsto H_\varphi\ :\ \varphi^\Sigma \simeq \varphi.
	\end{align*} This is an auxiliary map for defining the homotopy $K$ between $ip$ and $\id$ in $\MC(L)^{hG}$; 
it is explicitly given on $n$-simplices by:
	
	\begin{equation*}
\begin{tikzcd}
{K_n:\mathsf{sSet}_G\left(\Delta^n \times EG, \MC(L)\right) \times \Delta^1_n} \arrow[r] & {\mathsf{sSet}_G\left(\Delta^n \times EG, \MC(L)\right)} &                                                      \\
{(f,\tau)} \arrow[r, maps to]                                                                     & K_n(f,\tau):\Delta^n \times EG \arrow[r]                                 & \MC(L)                                                \\
& {\left(\sigma,g_m,...,g_0\right)} \arrow[r, maps to]                  & {H_{f(\sigma,-)} \left((g_m,...,g_0),\tau\right)}
\end{tikzcd}
	\end{equation*}
	
	The map $K$ is $G$-simplicial by construction.\\
	
	{\noindent \em Step 2: Induction.} Because of Step 1, the inclusion $\MC\left(L\right)^G \hookrightarrow \MC\left(L\right)^{hG}$ is a homotopy equivalence if $L$ is abelian. Let $L$ be a complete $G$-$sL_\infty$-algebra. Since the $G$-action preserves the filtration, such an $sL_\infty$-algebra $L$ is the inverse limit of a tower of central extensions of nilpotent $G$-$sL_\infty$-algebras, whose fibers are abelian $G$-$sL_\infty$-algebras:
	\begin{center}
\begin{tikzcd}
	\cdots \arrow[r, two heads] & L/F^{n+1}L \arrow[r, two heads]   & \cdots \arrow[r, two heads] & L/F^4L \arrow[r, two heads] & L/F^3L \arrow[r, two heads] & L/F^2L \\
	& F^nL/F^{n+1}L \arrow[u, hook] &                             & F^3L/F^4L \arrow[u, hook]   & F^2L/F^3L \arrow[u, hook]   &       
\end{tikzcd}
	\end{center} Note that $L/F^2L$ is abelian as well. The induction hypotheses is that the inclusion $\MC\left(L/F^nL\right)^G \hookrightarrow \MC\left(L/F^nL\right)^{hG}$
is a homotopy equivalence. Disregarding the $G$-action, each such a central extension
	\begin{center}
\begin{tikzcd}
	F^nL/F^{n+1}L \arrow[r, hook] & L/F^{n+1}L \arrow[r, two heads] & L/F^{n}
\end{tikzcd}
	\end{center}
	 maps to a fibration of simplicial sets (see for instance \cite[Prop. 4.7]{Get09} or \cite[Thm. 3]{Rog20})
	\begin{center}
\begin{tikzcd}
	\MC \left(F^nL/F^{n+1}L\right) \arrow[r, hook] & \MC \left(L/F^{n+1}L\right) \arrow[r, two heads] & \MC \left(L/F^{n}L\right).
\end{tikzcd}
	\end{center} The fibration above is of $G$-simplicial sets, and taking fixed or homotopy fixed points in it yields again a fiber sequence. Therefore, we can compare base and fiber between fixed and homotopy fixed points. That is, we consider the diagram
	\begin{center}
\begin{tikzcd}
	\MC\left(F^nL/F^{n+1}L\right)^G \arrow[r, hook] \arrow[d, "\simeq"', hook] & \MC\left(L/F^{n+1}L\right)^G \arrow[r, two heads] \arrow[d, , hook] & \MC\left(L/F^{n}L\right)^G \arrow[d, "\simeq", hook] \\
	\MC\left(F^nL/F^{n+1}L\right)^{hG} \arrow[r, hook]                         & \MC\left(L/F^{n+1}L\right)^{hG} \arrow[r, two heads]                         & \MC\left(L/F^{n}L\right)^{hG}                       
\end{tikzcd}
	\end{center} 
There are homotopy equivalences on the fibers (because $F^nL/F^{n+1}L$ is abelian) and on the base spaces (by the induction hypothesis). It remains to show that the middle arrow is a weak equivalence. 
	Since the above is a commutative diagram of fibrations, there is a commutative diagram between the induced long exact sequences of homotopy groups at the canonical base point
\begin{center}
\begin{tikzcd}
 \vdots \arrow[d] & \vdots \arrow[d] \\
 \pi_k\left(\MC\left(F^nL/F^{n+1}L\right)^G\right) \arrow[r] \arrow[d]&\pi_k\left(\MC\left(F^nL/F^{n+1}L\right)^{hG}\right) \arrow[d] \\
\pi_k\left(\MC \left(L/F^{n+1}L\right)^G \right) \arrow[d]\arrow[r] &  \pi_k\left(\MC \left(L/F^{n+1}L\right)^{hG} \right) \arrow[d] \\
 \pi_k\left( \MC \left(L/F^{n}L\right)^G \right) \arrow[d]\arrow[r] & \pi_k\left( \MC \left(L/F^{n}L\right)^{hG} \right) \arrow[d] \\
 \vdots & \vdots
 \end{tikzcd}
\end{center}
An application of the 5-Lemma (see for example \cite[Lemma 3.1]{Yve12}) gives that 
	\[
	 \pi_k\left( \MC \left(L/F^{n+1}L\right)^{G} \right)\rightarrow \pi_k\left( \MC \left(L/F^{n+1}L\right)^{hG} \right)
	\]
is also an isomorphism for all $k \geq 0$. To finish, recall that 
if a map between
towers of fibrations is a levelwise fibration or a weak equivalence, then so is the
induced map on inverse limits \cite[Chapter IV]{Goe99}, which completes the proof.
\end{proof}

The following consequence extends another of the main results of \cite{Goy89}.

\begin{corollary}\label{Homotopy Groups}
	Let $G$ be a finite group and $L=L_{\geq 1}$ be a positively graded complete $G$-$sL_\infty$-algebra. Then, $$\pi_*\left( \MC(L)^{hG}\right) \cong \big(\pi_*\MC(L)\big)^G.$$
\end{corollary}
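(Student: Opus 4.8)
The plan is to chain the main theorem together with the purely algebraic identifications of the homotopy groups of a Maurer-Cartan space. First I would invoke Theorem \ref{HomEq}: since the inclusion $\MC(L)^G \hookrightarrow \MC(L)^{hG}$ is a homotopy equivalence of Kan complexes, it induces isomorphisms on all homotopy groups at the canonical basepoint, so that $\pi_*\!\left(\MC(L)^{hG}\right) \cong \pi_*\!\left(\MC(L)^G\right)$. By Remark \ref{Remark1} the right-hand side equals $\pi_*\!\left(\MC(L^G)\right)$, where $L^G$ is again a positively graded complete $sL_\infty$-algebra, being a graded subspace of $L=L_{\geq 1}$. This reduces the corollary to producing a $G$-equivariant isomorphism $\pi_*\!\left(\MC(L^G)\right) \cong \big(\pi_*\MC(L)\big)^G$.

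Next I would compute both sides in homology using Berglund's theorem \cite{Ber15}, which at the canonical Maurer-Cartan element $0$ reads $\pi_*\!\left(\MC(K)\right) \cong H_*(K)$ for any positively graded complete $sL_\infty$-algebra $K$, since at $0$ the twist is trivial and the truncation is invisible in positive degrees. Applying this to $K=L^G$ gives $\pi_*\!\left(\MC(L^G)\right) \cong H_*(L^G)$. Because the degree-one bracket $\ell_1$ commutes with the $G$-action by Equation (\ref{ActionOnBrackets}) with $n=1$, the underlying chain complex of $L^G$ is exactly the fixed subcomplex $(L,\ell_1)^G$, so Proposition \ref{Proposition1} yields $H_*(L^G) = H_*\!\left((L,\ell_1)^G\right) \cong H_*(L)^G$. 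Applying Berglund's theorem to $L$ itself identifies $\pi_*\MC(L) \cong H_*(L)$, and taking invariants gives $\big(\pi_*\MC(L)\big)^G \cong H_*(L)^G$; concatenating all these identifications produces the claim.

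The step I expect to be the main obstacle is verifying that this last chain is genuinely $G$-equivariant, rather than merely an abstract isomorphism of graded vector spaces. The $G$-action on $\MC(L)$ is induced functorially, each $g\in G$ being an automorphism of $L$ in $\GL$ that fixes the zero Maurer-Cartan element and hence acts on $\pi_*\!\left(\MC(L)\right)$. Since Berglund's isomorphism $\pi_*\!\left(\MC(L)\right)\cong H_*(L)$ is natural in $L$, the action of $g$ on $\pi_*\!\left(\MC(L)\right)$ must correspond under it to the induced map $H_*(g)$ on $H_*(L)$, so the isomorphism intertwines the two $G$-actions and commutes with passage to invariants. Establishing this naturality with respect to strict automorphisms of $L$ is the only point requiring care; everything else is a formal assembly of results already available in the excerpt.
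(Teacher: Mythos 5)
Your proposal is correct and follows essentially the same route as the paper, which proves the corollary by exactly the chain $\pi_*\left(\MC(L)^{hG}\right) \cong \pi_*\left(\MC(L)^G\right) \cong \pi_*\left(\MC(L^G)\right) \cong H_*\left(L^G\right) \cong H_*(L)^G \cong \big(\pi_*\MC(L)\big)^G$ via Theorem \ref{HomEq}, Remark \ref{Remark1}, Berglund's theorem, and Proposition \ref{Proposition1}. Your additional verification that Berglund's isomorphism is natural with respect to the strict automorphisms given by the $G$-action, so that the final identification $\big(\pi_*\MC(L)\big)^G \cong H_*(L)^G$ is legitimate, is a point the paper leaves implicit and is correctly resolved in your argument.
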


\begin{proof} Apply, in the order given, the isomorphisms of Theorem \ref{HomEq}, Remark \ref{Remark1},  \cite[Theorem 1.1]{Ber15}, Proposition \ref{Proposition1} 2., and again \cite[Theorem 1.1]{Ber15} to get the following chain of isomorphisms:
	\begin{equation*}
		\pi_*\left(\MC(L)^{hG}\right) \ \cong \ \pi_* \left(\MC\left(L\right)^{G}\right) \ \cong \ \pi_* \left(\MC\left(L^{G}\right)\right)\ \cong \ H_{*} \left(L^G\right) \ \cong \ H_{*}\left(L\right)^G \ \cong \ \big(\pi_*\MC(L)\big)^G.
			\end{equation*}
Here, 
if $*=1$,
the group structure on $H_1(L)$ is given by the Baker-Campbell-Hausdorff formula.
\end{proof}

\begin{remark} 
 Neither Theorem \ref{HomEq}, nor the main result of \cite{Goy89} assuming  $X$ simply-connected of finite rational type, prove that for $G$ finite and $X$ a connected rational space, the inclusion $X^G \hookrightarrow X^{hG}$ is a homotopy equivalence. 
 Indeed, even though a nice enough connected rational space $X$ is of the weak homotopy type of the spatial realization of $\MC(L)$ for some complete $sL_\infty$-algebra $L$, this identification typically does not respect the $G$-action. 
 For instance, we can take $X$ to be a space endowed with a free $G$-action, and any weakly equivalent simplicial set of the form $\MC(L)$ will always have the zero element as a fixed point. 
 One should be careful in choosing the correct notion of the rationalization of a $G$-space, which is outside the scope of this work.
\end{remark}

\begin{remark} 
	\label{Rem: Non-finite groups}
	As mentioned in the introduction, 
	one should not expect a good general relationship between fixed and homotopy fixed points of $\MC(L)$ for infinite groups.
	For instance, let $G=\mathbb{Z}$ be the discrete group of the integers and $L$ any (non-trivial) complete $sL_\infty$ algebra.
	Recall that the classifying space $B\Z$ of the discrete group of the integers is the circle $S^1$.
	Endow $L$ with the trivial $\Z$-action.
	Then, on the one hand, $\MC(L)^{\Z} = \MC(L)$ because the action is trivial. 
	On the other hand, 
	\begin{align*}
		\MC(L)^{h\Z} &= \Map_\Z\left(E\Z,\MC(L)\right) = \Map\left(B\Z,\MC(L)\right) = \Map\left(S^1, \MC(L)\right) = \mathcal L \MC(L)
	\end{align*}
	is the free loop space on $\MC(L)$.
	For other more involved infinite groups, like the circle $S^1$,
	similar manipulations produce straightforward  examples of fixed points and homotopy fixed points of Maurer-Cartan spaces having radically different homotopy type.
\end{remark}

\section{Rational models for fixed and homotopy fixed points}\label{LieModels}

Let $X$ be a $G$-simplicial set and $L$ a complete $G$-$sL_\infty$algebra. 
Under certain connectivity assumptions on $X$ and $L$,
we give in this section:
\begin{itemize}
	\item  a $G$-$sL_\infty$-model for the mapping space $\Map\left(X,\MC\left(L\right)\right)$ endowed with the conjugation action, and
	\item an $sL_\infty$-model for $\Map\left(X,\MC\left(L\right)\right)^{hG}$, the homotopy fixed points of the mapping space above. 
\end{itemize}  
These results are theorems \ref{Model of Map+Conjugation} and  \ref{Models of homotopy fixed points}, respectively, and together they correspond to Theorem \ref{thm B} in the introduction.

\medskip

There are at least two possible approaches for studying $G$-equivariant rational homotopy theory. 
The most complete approach might consist in declaring a rational model of the $G$-homotopy type of $X$ as a functor 
from the homotopy orbit category $\mathcal O_G$ of $G$ to  the category $\GL$. 
This is outside of the scope of this article.
This approach is pursued in \cite{Tri82,Gol98,Scu08},
using Sullivan's perspective of rational homotopy theory and CDGA's. 
An alternative approach, 
not so exhaustive, 
but arguably much more prone to computations, 
consists in producing weaker algebraic models that still capture a part of the $G$-structure.
We stick to this second approach in this work, and will explain the details in the next paragraphs.

We will focus on $G$-simplicial sets (everything works as well for $G$-spaces).
For us, the weak equivalences are the $G$-equivariant simplicial maps inducing an isomorphism in all homotopy groups, 
and a bijection on $\pi_0$. 
To give our algebraic models, we will nonetheless focus on connected simplicial sets.
As is widely known, if the fundamental group of a pointed simplicial set $X$ is not nilpotent, 
then there are different  non-equivalent choices for the rationalization of $X$.
Bearing this in mind,
we stick to the following convention in this section: 
 simplicial sets will be reduced 
(i.e., possessing a single $0$-simplex) and $\Q$-good in the sense of Bousfield-Kan \cite{Bou72}.
With this convention, such a simplicial set $X$ is called \emph{rational} if the canonical map $X\to \Q_\infty(X)$ from $X$ onto its Bousfield-Kan $\Q$-completion is a weak equivalence,
and a map $f:X\to Y$ is defined to be a \emph{rational equivalence}
if the induced map 
$\Q_\infty\left(f\right) : \Q_\infty(X)\to \Q_\infty(Y)$ on the Bousfield-Kan $\Q$-completions is a weak homotopy equivalence \cite{Bou72}. 
A \emph{$G$-$sL_\infty$-model} of such a $G$-simplicial set $X$ is a complete $G$-$sL_\infty$-algebra $L$ for which there is a zig-zag of rational equivalences connecting $\MC\left(L\right)$ and $X$ (equivalently, $\Q_\infty(X)$) formed by equivariant maps, 
\begin{equation}\label{ecu:Zig Zag}
\begin{tikzcd}
X & \cdots \arrow[l, "\simeq"'] \arrow[r, "\simeq "] & \MC\left(L\right).
\end{tikzcd}
\end{equation}
Recall from Equation (\ref{Ecu:Adjoint Sullivan})
the contravariant adjunction between simplicial sets and rational CDGA's given by Sullivan's piece-wise polynomial de Rham forms. As mentioned in Section \ref{MCSimplicial}, this was extended in \cite{Goy89} by taking into account the action of a finite group $G$,
$$\Sull : \GsSet \leftrightarrows \GCDGA : \left\langle-\right\rangle.$$
This adjuntion exists for any group, not necessarily finite.
Recall that for $G$-simplicial sets $X$ and $Y$, $\Map\left(X,Y\right)$ is a $G$-simplicial set with the conjugation action. The following result extends \cite[Theorem 6.6]{Ber15} to the $G$-equivariant setting. 

\begin{theorem}\label{Model of Map+Conjugation}  Let $G$ be an arbitrary group. 
If $X$ is a $G$-simplicial set and $L=L_{\geq1}$ is a positively graded complete $G$-$sL_\infty$-algebra, then there is a natural homotopy equivalence of Kan complexes which is $G$-equivariant
	\begin{equation}\label{mapa}
	\varphi : \MC\left(\Sull\left(X\right)\widehat{\otimes} L\right) \xrightarrow{\simeq} \Map\left(X,\MC\left(L\right)\right).
	\end{equation} 	
In particular, $\varphi$ induces a homotopy equivalence $$\MC\left(\Sull\left(X\right) \widehat \otimes L\right)^{hG} \xrightarrow{\simeq} \Map\left(X,\MC(L)\right)^{hG}.$$
\end{theorem}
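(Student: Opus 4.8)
The plan is to obtain $\varphi$ from its non-equivariant counterpart \cite[Theorem 6.6]{Ber15} and then upgrade the statement to the $G$-equivariant setting by a careful bookkeeping of the actions, after which the ``in particular'' clause becomes formal. First I would record that both simplicial sets in (\ref{mapa}) are Kan complexes: $\MC$ of any complete $sL_\infty$-algebra is Kan by \cite{Get09}, and $\Map(X,\MC(L))$ is Kan because its target is (Proposition \ref{Prop: G Mapping spaces are Kan complexes}). Hence, for these spaces, a weak equivalence is the same as a homotopy equivalence, so it suffices to produce a natural $G$-equivariant weak equivalence.

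Second, I would recall the natural map of \cite[Theorem 6.6]{Ber15} and describe it on $n$-simplices. Using that $\Sull$ is monoidal, $\Sull(X\times\Delta^n)\cong\Sull(X)\otimes\Omega_n$, together with the reshuffling $(\Sull(X)\widehat\otimes L)\widehat\otimes\Omega_n\cong\Sull(X\times\Delta^n)\widehat\otimes L$, one identifies an $n$-simplex of $\MC(\Sull(X)\widehat\otimes L)$ with a Maurer--Cartan element $\tilde z\in MC(\Sull(X\times\Delta^n)\widehat\otimes L)$. To such a $\tilde z$ one assigns the simplicial map $\Delta^n\times X\to\MC(L)$ whose value on a $k$-simplex $\tau\colon\Delta^k\to X\times\Delta^n$ is $(\tau^*\widehat\otimes\id_L)(\tilde z)$, an element of the set $MC(L\widehat\otimes\Omega_k)$ of $k$-simplices of $\MC(L)$, where $\tau^*\colon\Sull(X\times\Delta^n)\to\Omega_k$ is the pullback CDGA map. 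This $\varphi$ is visibly natural in $X$ (contravariantly) and in $L$ (covariantly), and it is a homotopy equivalence by \textit{loc.\ cit}.

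Third --- and this is the crux --- I would verify $G$-equivariance of $\varphi$ for $X\in\GsSet$ and $L\in\GL$. Here I would invoke a general principle: if $\eta\colon F\Rightarrow F'$ is a natural transformation and $c$ carries a $G$-action $G\to\Aut(c)$, then $\eta_c$ is automatically $G$-equivariant for the induced actions on $F(c)$ and $F'(c)$. Thus the real task is to check that the actions induced through the two functors are exactly the ones in the statement. On the target, the conjugation action on an $n$-simplex reads $(g\cdot f)(\tau)=g\cdot f(g^{-1}\cdot\tau)$ with $G$ acting trivially on the $\Delta^n$-coordinate; on the source, $g$ acts diagonally, through the contravariant action $(g_X^{-1})^*$ on $\Sull(X)$, through the given action on $L$, and trivially on $\Omega_n$. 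The matching is then a short chase resting on three facts: (i) $\Sull(X\times\Delta^n)\cong\Sull(X)\otimes\Omega_n$ is $G$-equivariant with trivial action on $\Omega_n$; (ii) for a $k$-simplex $\tau$ one has $\tau^*\circ(g_X^{-1}\times\id)^*=(g^{-1}\cdot\tau)^*$; and (iii) the $L$-action commutes past $\tau^*$ since they act on different tensor factors. Combining these yields $\varphi(g\cdot z)(\tau)=g\cdot\bigl(\varphi(z)(g^{-1}\cdot\tau)\bigr)=(g\cdot\varphi(z))(\tau)$, which is precisely equivariance. The delicate point, and where I expect the main obstacle to lie, is the $g\leftrightarrow g^{-1}$ bookkeeping forced by the contravariance of $\Sull$ (the convention $(g\cdot x)^\vee=g^{-1}\cdot(x)^\vee$) together with the compatibility of the completed tensor product with the $G$-action; lining these up with the conjugation formula is exactly what must be checked explicitly.

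Finally, the ``in particular'' clause is formal. By the previous step $\varphi$ is a $G$-equivariant map whose underlying map is a weak equivalence, hence an equivariant weak equivalence. Applying Proposition \ref{Prop: w.e. + G map implies w.e. homotopy fixed points} gives a weak equivalence $\varphi^{hG}\colon\MC(\Sull(X)\widehat\otimes L)^{hG}\to\Map(X,\MC(L))^{hG}$; since both homotopy fixed-point spaces are again Kan complexes by Proposition \ref{Prop: G Mapping spaces are Kan complexes}, this weak equivalence is a homotopy equivalence, as claimed.
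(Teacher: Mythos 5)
Your proposal is correct and follows essentially the same route as the paper's proof: take Berglund's map from \cite[Theorem 6.6]{Ber15}, observe that it is assembled from the natural ($G$-equivariant) quasi-isomorphisms $\Sull(X)\otimes\Omega_n\to\Sull(X\times\Delta^n)$ and the natural pullback map sending a Maurer--Cartan element to the simplicial map $\tau\mapsto(\tau^*\widehat\otimes\id_L)(\tilde z)$, and deduce the homotopy fixed-point clause from Proposition \ref{Prop: w.e. + G map implies w.e. homotopy fixed points} --- you merely write out the equivariance chase in more detail than the paper does. One cosmetic correction: $\Sull(X)\otimes\Omega_n\to\Sull(X\times\Delta^n)$ is only a quasi-isomorphism, not an isomorphism, so the $n$-simplex $z$ is pushed forward to, rather than identified with, the element $\tilde z$ --- but since the map goes in the direction you use, this does not affect the argument.
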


\begin{proof}
The map $\varphi$ is analogous to the one defined in \cite[Theorem 6.6]{Ber15}.
Following the proof in \emph{loc. cit.}, 
we see that $\varphi$ is equivariant when considering the functor $\MC$  and endow the mapping space with the conjugation action.
Indeed, $\varphi$ depends only  on the natural quasi-isomorphisms $\Sull(X)\otimes \Sull(\Delta^n) \xrightarrow{\simeq} \Sull\left(X\times \Delta^n\right)$, which are $G$-equivariant, 
and the natural set-theoretic map (in the particular case that $Z=X\times \Delta^n$)
$$\mu:MC\left(\Sull\left(Z\widehat \otimes L\right)\right) \to \mathsf{sSet}\left(Z,\MC(L)\right)$$
sending a Maurer-Cartan element $\tau \in MC\left(\Sull\left(Z\widehat \otimes L\right)\right)$ to the simplicial map $$\mu(\tau) :Z\to \MC(L), \quad z \mapsto z^*(\tau),$$
where we see $z\in Z_n$ as a simplicial map $\Delta^n \to Z$, and then $z^* = \Sull\left(z:\Delta^n \to Z\right)\widehat \otimes \id_L.$
By its definition, 
the map $\mu$ is also compatible with the $G$-action.
The fact that the induced map $\varphi^{hG}$ is a weak equivalence follows from Proposition \ref{Prop: w.e. + G map implies w.e. homotopy fixed points}.
\end{proof}

Recall our definition of $G$-$sL_\infty$-model explained at the beginning of the section.
In this sense, the statement of Theorem \ref{Model of Map+Conjugation}
is telling us that the complete $G$-$sL_\infty$ algebra   $\Sull(X)\widehat{\otimes}L$ is a $G$-$sL_\infty$-model  of $\Map\left(X,\MC(L)\right)$.

\begin{remark}\label{remark: Any G-CDGA model is ok}
In Theorem \ref{Model of Map+Conjugation} above, we may substitute $\Sull(X)$ by any $G$-CDGA model of $X$, and the result still holds. 
Recall that $A$ is a $G$-CDGA model of $X$ if there is a CDGA quasi-isomorphism $A \xrightarrow{\simeq} \Sull(X)$ which is a $G$-map.
	In this case, the induced natural weak equivalence
	$$\MC\left(A\widehat \otimes L\right) \xrightarrow{\simeq } \MC\left(\Sull(X)\widehat \otimes L\right)$$ is $G$-equivariant. Therefore, if $A$ is any $G$-CDGA model of $X$, then $A\widehat \otimes L$ is a $G$-$sL_\infty$-model of the mapping space $\Map\left(X,\MC(L)\right).$ 
\end{remark}

Combining the result above with Theorem \ref{HomEq}, we obtain following result.

\begin{theorem}\label{Models of homotopy fixed points}
Let $G$ be a finite group, 
$A$ a $G$-CDGA model of a $G$-simplicial set $X$,
and $L=L_{\geq 1}$  a positively graded complete $G$-$sL_\infty$ algebra.
If $A$ is concentrated in degrees strictly smaller than the connectivity of $L$, 
then 
then there is a natural homotopy equivalence of Kan complexes 
	\begin{equation*}
	\MC\left(\left(A \otimes L\right)^G\right) \xrightarrow{\simeq}\Map\left(X,\MC(L)\right)^{hG},
	\end{equation*} 
	where the source is the Maurer-Cartan simplicial set of the complete $sL_\infty$-algebra of $G$-invariant elements of $A\otimes L$, and the target is the space of homotopy fixed points of the simplicial mapping space $\Map\left(X,\MC(L)\right)$.
\end{theorem}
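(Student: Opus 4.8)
The plan is to combine the two main results already established, namely Theorem \ref{Model of Map+Conjugation} (in the strengthened form of Remark \ref{remark: Any G-CDGA model is ok}) and Theorem \ref{HomEq}, by chaining together a sequence of $G$-equivariant homotopy equivalences and then applying the fixed-points construction. First I would invoke Remark \ref{remark: Any G-CDGA model is ok} to obtain, for the given $G$-CDGA model $A$ of $X$, a natural $G$-equivariant homotopy equivalence $\varphi : \MC(A\widehat\otimes L) \xrightarrow{\simeq} \Map(X,\MC(L))$. Taking homotopy fixed points and using Proposition \ref{Prop: w.e. + G map implies w.e. homotopy fixed points}, this yields a homotopy equivalence $\MC(A\widehat\otimes L)^{hG} \xrightarrow{\simeq} \Map(X,\MC(L))^{hG}$. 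The target of the theorem is therefore matched, and the remaining task is to identify the source $\MC((A\otimes L)^G)$ with $\MC(A\widehat\otimes L)^{hG}$ up to homotopy equivalence.

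For this identification I would proceed in two steps. The complete $G$-$sL_\infty$ algebra $A\widehat\otimes L$ is positively graded and carries a $G$-action, so Theorem \ref{HomEq} applies directly: the natural inclusion
\begin{equation*}
	\MC(A\widehat\otimes L)^{G} \hookrightarrow \MC(A\widehat\otimes L)^{hG}
\end{equation*}
is a homotopy equivalence of Kan complexes. By Remark \ref{Remark1}, the source commutes with fixed points, giving a natural isomorphism $\MC((A\widehat\otimes L)^G) \cong \MC(A\widehat\otimes L)^{G}$. Thus it suffices to compare the completed tensor product with the uncompleted one on $G$-invariants. This is exactly where the degree hypothesis enters: since $A$ is concentrated in degrees strictly smaller than the connectivity of $L$, the bracketing length is bounded in each total degree, so the lower-central-series filtration of $A\otimes L$ is already complete degree-wise. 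Concretely, in each fixed total degree only finitely many filtration steps contribute, so the canonical map $(A\otimes L)^G \to (A\widehat\otimes L)^G$ is an isomorphism of complete $sL_\infty$ algebras, and hence induces an isomorphism on $\MC$.

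Stringing these together in order---the completion identification $\MC((A\otimes L)^G)\cong\MC((A\widehat\otimes L)^G)$, then Remark \ref{Remark1}, then the equivalence of Theorem \ref{HomEq}, then the homotopy-fixed-points version of $\varphi$---produces the desired zig-zag
\begin{equation*}
	\MC\bigl((A\otimes L)^G\bigr) \xrightarrow{\cong} \MC(A\widehat\otimes L)^G \xrightarrow{\simeq} \MC(A\widehat\otimes L)^{hG} \xrightarrow{\simeq} \Map(X,\MC(L))^{hG}.
\end{equation*}
Naturality is inherited at each stage, and all four simplicial sets are Kan complexes by Proposition \ref{Prop: G Mapping spaces are Kan complexes} together with the fact that $\MC$ always lands in Kan complexes, so the composite is a homotopy equivalence of Kan complexes.

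The main obstacle I anticipate is the second step, verifying that the degree hypothesis genuinely forces $A\otimes L$ to already be complete (equivalently, that no completion is lost on passing to $G$-invariants). The subtlety is that completeness of $A\otimes L$ is a degree-wise statement controlled by the interaction between the degrees of $A$ and the connectivity of $L$: one must check that the connectivity bound limits the number of $L$-factors whose brackets can land in a given degree, so that each graded piece of the lower central series stabilizes after finitely many steps. Care is needed because taking $G$-invariants is an exact functor over $\Q$ for finite $G$ (by the averaging argument behind Proposition \ref{Proposition1}), so it commutes with the relevant (co)limits, but the precise bookkeeping that the degree condition yields degree-wise nilpotency---and therefore that completion is a formality here---is the technical heart of the argument.
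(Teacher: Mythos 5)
Your proposal is correct and follows essentially the same route as the paper's proof: a $G$-equivariant weak equivalence $\MC(A\widehat{\otimes}L)\xrightarrow{\simeq}\Map(X,\MC(L))$ obtained from the $G$-CDGA model (Theorem \ref{Model of Map+Conjugation} via Remark \ref{remark: Any G-CDGA model is ok}), Proposition \ref{Prop: w.e. + G map implies w.e. homotopy fixed points} to pass to homotopy fixed points, and then Remark \ref{Remark1} together with Theorem \ref{HomEq} to identify the source. The one genuine addition is that you spell out the identification $(A\otimes L)^G\cong(A\widehat{\otimes}L)^G$ and correctly locate the role of the degree hypothesis there (the paper's proof silently writes $(A\widehat{\otimes}L)^G$ where the statement has $(A\otimes L)^G$); note only that the same degree count is also what makes $A\widehat{\otimes}L$ positively graded, which is needed for Theorem \ref{HomEq} to apply and which you assert without comment.
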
 

\begin{proof} 
Since $A$ is a $G$-CDGA model of $X$, there is an induced natural weak equivalence
 $$\MC\left(A\widehat \otimes L\right) \xrightarrow{\simeq } \MC\left(\Sull(X)\widehat \otimes L\right)$$ that is $G$-equivariant. 
Under the hypotheses of the statement, the map $\varphi$ in (\ref{mapa}) is also a weak equivalence and a $G$-map.
Therefore, the composition of the two maps yields a  $G$-equivariant weak equivalence
$$\Psi:\MC\left(A\widehat \otimes L\right)
 \xrightarrow{\simeq } \Map\left(X,\MC\left(L\right)\right).$$
The map induced by $\Psi$ on homotopy fixed points is then a weak equivalence (Proposition \ref{Prop: w.e. + G map implies w.e. homotopy fixed points}). 
This, together with Remark \ref{Remark1} and Theorem \ref{HomEq} gives the following sequence of weak equivalences, finishing the proof:
	\begin{equation*}
	 \MC\left(\left(A{\widehat \otimes} L\right)^{G}\right) =\MC\left(A{\widehat \otimes} L\right)^{G} \xhookrightarrow{\simeq} \MC\left(A{\widehat\otimes} L\right)^{hG} \xrightarrow{\Psi^{hG}} \Map\left(X,\MC\left(L\right)\right)^{hG}.
	\end{equation*}
\end{proof}
An $sL_\infty$ model of a reduced $\Q $-good simplicial set $X$ 
(with no $G$-action) is a complete $sL_\infty$ algebra $L$ for which there is a zig-zag as in (\ref{ecu:Zig Zag}), but disregarding $G$-actions.
In this sense, 
Theorem \ref{Models of homotopy fixed points} above is telling us that the complete $sL_\infty$ algebra of $G$-invariants $\left(A{\otimes}L\right)^G$ is an $sL_\infty$-model for $\Map\left(X,\MC(L)\right)^{hG}.$ 

\medskip

We finish the section with some examples.

\begin{example} {\bf{(Spheres with antipodal action)}}
	Endow the connected sphere $S^n$ with the  antipodal action by $\Z_2$. 
	Denote by $\sigma$ the non-trivial transposition of $\Z_2$.
	A $G$-$sL_\infty$ model of this $G$-space is the following.
	\begin{itemize}
		\item For odd $n$,
		$$L = \Q  x, \quad  \quad |x|=n, \qquad  \sigma:x\mapsto x.$$
		In this case, $L^G = L$ is the trivial $sL_\infty$ algebra. Therefore, $\MC(L)^{hG}\simeq K\left(\Q,n\right).$	
		\item For even $n$,
		\begin{equation*}
			L =  \Q  x \oplus \Q  [x,x]. 
		\end{equation*} 
Here, we also have $|x|=n$ and $\sigma : x\mapsto -x.$
The action of $\sigma$ extends trivially to the bracket $[x,x]$.
Therefore, $L^G = \Q  [x,x]$ is abelian $1$-dimensional. Thus, $\MC(L)^{hG} = K\left(\Q,2n-1\right).$
\end{itemize}
\end{example}

\begin{example} 
	{\bf{($\Z_2$ actions on the complex projective spaces)}}
	Let $\C P^n$ be the complex projective $n$-dimensional space.
	It has an $sL_\infty$ model 
	$$L = \Q   x\oplus \Q  y, \qquad |x| = 2, \quad  |y| = 2n+1,$$ 
	with a single non-vanishing higher bracket $$\ell_{n+1}(x,...,x) = \frac{1}{(n+1)!} y.$$
	Any $\mathbb{Z}_2$-action on this complete $sL_\infty$-algebra acts by scalar multiple on the linear generators $x$ and $y$.
	Thus, it is of the form 
	 $x\mapsto ax$ and $y\mapsto by$ for some $a,b\in  \mathbb{Q}$.
	Imposing compatibility of this linear action with the only non-trivial bracket $\ell_{n+1}$, 
	we deduce that $b=a^{n+1}$.
	Therefore, each non-zero rational  $a\in \mathbb{Q}^*=\mathbb{Q}-\{0\}$ determines a distinct $\mathbb{Z}_2$-action on the complete $sL_\infty$ model of $\mathbb{C}P^n$, and these are all the possible $\mathbb{Z}_2$-actions.	
Let us compute the homotopy type of the corresponding homotopy fixed points spaces after applying the functor $\MC$ to all these possibilities. 	
Fixed $a\in \mathbb{Q}^*$,
	 we find that:
	\begin{itemize}
		\item If $a=1$, then the action is trivial.
		
		\item If $a=-1$, then $x\mapsto -x$ and $y\mapsto (-1)^{n+1}y.$ Therefore:
		\begin{itemize}
			\item If $n$ is even,
			 then $x\mapsto -x$ and $y \mapsto -y.$ 
			In this case, we find that $$\MC(L)^{hG} \simeq \MC(L^G) = \MC(0) = *.$$
			
			\item If $n$ is odd,  then $x \mapsto -x$ and 
			$y \mapsto y.$
			Therefore, $L^G = \langle y \rangle$ and we find that $$\MC(L)^{hG} \simeq \MC(L^G) = \MC(\langle y \rangle) = K\left(\Q,2n+1\right).$$
		\end{itemize}
	\item If $a\neq \pm1$, then $x\mapsto ax$ and $y\mapsto a^{n+1}y$. In this case, $L^G=0$ and therefore, $\MC(L)^{hG} \simeq *.$
	\end{itemize}
Geometrically, there is a sequence of inclusions of CW-complexes 
$$
* \subseteq \mathbb{C}P^1 \subseteq \mathbb{C}P^2 \subseteq \cdots \subseteq  \mathbb{C}P^n \subseteq \cdots \subseteq \mathbb{C}P^\infty=K(\mathbb{Z},2).
$$
There is a $\mathbb{Z}_2$-action on $K(\mathbb{Z},2)$ sending the generating 2-cell $e$  to $\sgn(\sigma)e$,
and this action restricts to each skeleton, producing a sequence of $\mathbb{Z}_2$-spaces.
The rationalization of the sequence above is modeled by a sequence of complete $sL_\infty$ algebras,
each of whom models the corresponding complex projective space and is equipped with a $\mathbb{Z}_2$-action which is precisely the case $a=-1$ above.			
\end{example}	

\begin{example}{\bf{(Products of Eilenberg-Mac Lane spaces with symmetric group action)}}
	Let $L = \Q u_1 \oplus \cdots \oplus \Q  u_m$ be an abelian $m$-dimensional $sL_\infty$ algebra concentrated in degree $n$.
	This is a model of the Eilenberg-Mac Lane space $K\left(\Q^{\times m},n\right)$.
	If $S_m$ acts by  permuting the generators $u_i$, then $L^G$ is the $1$-dimensional linear span of $u_1+\cdots + u_m$.
	Thus, $\MC\left(L\right)^{hG} \simeq K\left(\Q,m\right)$. 
\end{example}

The example above is rather trivial, but we wanted to raise the following point: 
the homotopy groups of the homotopy fixed points of a connected $G$-space $\MC(L)$ are dominated by the homotopy groups of $\MC(L)$.
Since the homotopy groups of a product of Eilenberg-Mac Lane spaces are totally controllable, it is very easy to compute homotopy fixed points.
On the other extreme, the homotopy groups of a wedge of spaces are typically difficult to compute.

\begin{example}{\bf{(Wedge of 2-spheres)}}	
Consider the simplicial model of the wedge $S^2 \vee S^2$ with $u_1,u_2$ as non-degenerate $2$-simplices. 
Endow the wedge with the $\Z_2$-action that permutes the simplices $u_1$ and $u_2$.
This is probably the simplest non-trivial example of a  $G$-space of the form $X^{\vee n}$ with $S_n$-action permuting the wedge factors. 
The homotopy groups of the homotopy fixed points of such a $G$-space are very complicated to express, compared to the actual fixed points. 
Indeed, 
let $L=\mathbb{L}(u_1,u_2)$ be the free graded Lie algebra with $|u_1|=|u_2|=1$, and endow it with the trivial differential.
This is a Quillen model of $S^2 \vee S^2$ in which the base point is represented by the $0$ Maurer-Cartan element.
The $\Z_2$-action on the model, interchanging $u_1$ and $u_2$, allows us in principle to compute the homotopy groups of $\left(S^2\vee S^2\right)^{h\Z_2}$. 
Since the differential in $L$ is trivial, these homotopy groups are given by $$\pi_*\left(\MC\left(L\right)^{h{\Z_2}}\right) = \pi_*\left(\MC\left(L^{\Z_2}\right)\right) = H_{*-1}\left(L^{\Z_2}\right) = H_{*-1}(L)^{\Z_2} = s^{-1}\left(L^{\Z_2}\right).$$
The shift in degree above arises because we are using a Quillen model, which is a non-shifted dg Lie algebra.
There are many $\Z_2$-fixed elements in $L$.
In fact, there are non-trivial fixed point subspaces of arbitrarily high dimension. 
The table below shows the computation of $\mathbb{Z}_2$-invariants in degrees $\leq 3$, were we can already appreciate that there is a rich and interesting space of fixed points.

\begin{center}

\begin{table}[h!]\centering	\resizebox{.8\textwidth}{!}{
\begin{tabular}{c|c|c|c}
	degree & linear basis of $L_*$              & image of basis under $\sigma$     & linear basis of $L^{\mathbb{Z}_2}_*$ \\ \hline
	1 & $u_1, u_2$ & $u_2,u_1$ & $u_1 + u_2$ \\
	2      & $[u_1,u_2], [u_1,u_1],[u_2,u_2]$   & $[u_1,u_2], [u_2,u_2],[u_1,u_1]$  & $[u_1,u_2], [u_1,u_1]+[u_2,u_2]$     \\
	3      & $[u_1,[u_1,u_2]], [u_2,[u_1,u_2]]$ & $[u_2,[u_1,u_2]],[u_1,[u_1,u_2]]$ & $[u_1,[u_1,u_2]] + [u_2,[u_1,u_2]]$ 
\end{tabular}
		}
\end{table}

\end{center}

\end{example}

In the following example,
we explicitly compute up to a certain degree the complete $sL_\infty$ algebra corresponding to a $\mathbb{Z}_2$-space whose fundamental group is non-trivial.

\begin{example} {\bf{(The homotopy fixed points of $\Map\left(S^2, S^3\vee S^3\right)$)}}
	Endow $S^2$ with the antipodal $\mathbb{Z}_2$-action.
	Given that $S^2$ is formal, we can take its CDGA model to be the cohomology algebra $A = H^*\left(S^2;\Q\right) = \Q 1 \oplus \Q x$, where $|x|=2$, and equip it  with the $\mathbb{Z}_2$-action given by $\sigma x = -x$.
	On the other hand, consider $S^3 \vee S^3$ as a simplicial set with two non-degenerate $3$-simplices $a,b$.
	Endow it with the $\mathbb{Z}_2$-action that permutes $a$ and $b$.
	Take the Quillen model of $S^3\vee S^3$, namely the dg Lie algebra 
	$L = \left(\mathbb{L}(a,b),0\right)$ with $|a|=|b|=2$ and trivial differential. 
	Endow it with the induced $\mathbb{Z}_2$-action that interchanges $a$ and $b$.
	This is a $\mathbb{Z}_2$-DGL model of $S^3 \vee S^3$.
	Below, we find a table describing the space of $\mathbb{Z}_2$-invariants up to degree $7$. 
\end{example}

\begin{center}
	
	\begin{table}[h!]\centering	\resizebox{.9\textwidth}{!}{
\begin{tabular}{c|c|c|c}
	degree &
	linear basis of $A \otimes L$ &
	image of basis under $\sigma$ &
	linear basis of $(A\otimes L)^{\mathbb{Z}_2}_*$ \\ \hline
	1 & $x\otimes a$                             & $-x\otimes b$                            & $x\otimes(a-b)$                                       \\
	& $x\otimes b$                             & $-x\otimes a$                            &                                                       \\ \hline
	2 & $\cdot$                                  & $\cdot$                                  & $\cdot$                                               \\ \hline
	3 & $1\otimes a, 1\otimes b,$                & $1\otimes b, 1\otimes a,$                & $1\otimes(a+b), x\otimes [a,b]$                       \\
	& $x\otimes [a,b]$                         & $x\otimes [a,b]$                         &                                                       \\ \hline
	4 & $\cdot$                                  & $\cdot$                                  & $\cdot$                                               \\ \hline
	5 & $1\otimes [a,b], $                       & $-1\otimes[a,b],$                        & $x\otimes\left([a,[a,b]] + [b,[a,b]]\right)$          \\
	& $x\otimes [a,[a,b]], x\otimes [b,[a,b]]$ & $x\otimes [b,[a,b]], x\otimes [a,[a,b]]$ &                                                       \\ \hline
	6 & $\cdot$                                  & $\cdot$                                  & $\cdot$                                               \\ \hline
	7 &
	$1\otimes [a,[a,b]], 1\otimes [b,[a,b]],$ &
	$-1\otimes [b,[a,b]], -1\otimes [a,[a,b]]$ &
	$1\otimes\left([a,[a,b]]-[b,[a,b]]\right)$ \\
	&
	$x\otimes [a,[a,[a,b]]], x\otimes [b,[a,[a,b]]], x\otimes [b,[b,[a,b]]]$ &
	$x\otimes [b,[b,[a,b]]], x\otimes [a,[b,[a,b]]], x\otimes [a,[a,[a,b]]]$ &
	$x\otimes [b,[a,[a,b]]]$ \\
	&                                          &                                          & $x\otimes \left([a,[a,[a,b]]] + [b,[b,[a,b]]]\right)$
\end{tabular}
		}
	\end{table}

\end{center}

\section{Comparison to other Maurer-Cartan spaces} \label{Sec: Comparison}

There are several solutions to the problem of integrating $L_\infty$ algebras, that is, to the geometric or simplicial realization of $L_\infty$ algebras. 
In this paper, we focus on the Deligne-Hinich $\infty$-groupoid $\MC(L)$. 
Other alternative constructions include Getzler's $\gamma_\bullet(L)$ \cite{Get09},
Buijs--Félix--Murillo--Tanré's $\langle L\rangle_\bullet$ \cite{Bui20} (for complete dg Lie algebras),
 and Robert-Nicoud--Vallette's $R_\bullet(L)$ \cite{Rob20} 
 (which is the natural extension of $\langle L\rangle_\bullet$ to complete $L_\infty$-algebras). 
Each of these constructions have advantages and disadvantages compared to each other, 
none of which concerns us here.
It is known, see the references just given, 
that all the just mentioned simplicial sets are homotopy equivalent Kan complexes whenever they are all defined. 
In this section, we prove that our statements hold for any of these alternative constructions. 
The main result of this section is Corollary \ref{Corollary: comparacion}.

\medskip

Since we have been working with $sL_\infty$ algebras up to this point, we start comparing our results to the work done in \cite{Rob20}.
In loc. cit., an adjoint pair of functors is introduced:
$$\mathcal L : \mathsf{sSet} \leftrightarrows \mathsf{s \widehat L_\infty^{St}} : R.$$
Recall that $\mathsf{s \widehat L_\infty^{St}} $ is the category of complete shifted $L_\infty$ algebras equipped with \emph{strict} morphisms.
The first step is to extend the adjoint pair above to an adjunction between the corresponding categories of $G$-objects.

\begin{proposition} \label{Prop:Adjoint pair}
	For any group $G$, the pair $\left(\mathcal L,R\right)$ above extends to an adjoint pair of functors 
	$$\mathcal L: \GsSet \leftrightarrows G\textrm{-}\mathsf{s\widehat L_\infty^{St}} : R.$$
\end{proposition}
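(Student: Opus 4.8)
The plan is to invoke the standard principle that a functor, and more generally an adjunction, extends automatically to categories of objects carrying a group action. Recall from Section~\ref{MCSimplicial} that a $G$-object of a category $\mathcal{C}$ is precisely an object $X$ together with a group homomorphism $\rho_X : G \to \Aut_{\mathcal{C}}(X)$. First I would define the extended functors by post-composition: given a $G$-simplicial set $(X,\rho_X)$, equip $\mathcal{L}(X)$ with the action $g \mapsto \mathcal{L}(\rho_X(g))$. Since $\mathcal{L}$ preserves identities and composition, this is a group homomorphism $G \to \Aut(\mathcal{L}(X))$. The point that makes the target category correct is that the image of $\mathcal{L}$ consists of \emph{strict} morphisms, so each $\mathcal{L}(\rho_X(g))$ is a strict automorphism and $\mathcal{L}(X)$ genuinely lands in $G\textrm{-}\mathsf{s\widehat L_\infty^{St}}$, not merely in the larger category of $\infty$-morphisms. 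Dually, a strict $G$-action on a complete $sL_\infty$ algebra $(L,\rho_L)$ produces the $G$-simplicial set $R(L)$ with action $g \mapsto R(\rho_L(g))$.

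Second, I would upgrade the adjunction bijection rather than re-derive it. The non-equivariant adjunction supplies a natural isomorphism
$$\Phi : \mathsf{sSet}(X, R(L)) \xrightarrow{\ \cong\ } \mathsf{s\widehat L_\infty^{St}}(\mathcal{L}(X), L).$$
Endow both hom-sets with the conjugation $G$-action, given on the left by $g \cdot f = R(\rho_L(g)) \circ f \circ \rho_X(g^{-1})$ and on the right by $g \cdot \phi = \rho_L(g) \circ \phi \circ \mathcal{L}(\rho_X(g^{-1}))$. Applying naturality of $\Phi$ first in the variable $L$ and then in the variable $X$ yields
$$\Phi(g \cdot f) = \rho_L(g) \circ \Phi\big(f \circ \rho_X(g^{-1})\big) = \rho_L(g) \circ \Phi(f) \circ \mathcal{L}(\rho_X(g^{-1})) = g \cdot \Phi(f),$$
so that $\Phi$ is $G$-equivariant for the conjugation actions. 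Since the $G$-fixed points of a conjugation action are exactly the $G$-equivariant morphisms, $\Phi$ restricts to the required natural bijection $\mathsf{sSet}_G(X, R(L)) \cong G\textrm{-}\mathsf{s\widehat L_\infty^{St}}(\mathcal{L}(X), L)$. Naturality in both variables is inherited verbatim from that of $\Phi$, and the same equivariance argument shows that the unit and counit of the original adjunction are $G$-equivariant, hence descend to the extended adjunction.

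I expect the argument to be entirely formal, with no analytic content; the real work is the recognition that the equivariant adjunction is the original one applied object-wise to the action homomorphisms. The only points deserving explicit verification are that post-composition preserves the strict-morphism, filtration, and completeness conditions built into $G\textrm{-}\mathsf{s\widehat L_\infty^{St}}$ --- each immediate because $\mathcal{L}$ and $R$ already respect those structures --- together with the short naturality computation displayed above. Thus the main, and quite minor, obstacle is bookkeeping: keeping the source and target actions straight in the conjugation formulas so that the two naturality squares fit together correctly.
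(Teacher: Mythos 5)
Your proposal is correct and takes essentially the same route as the paper: both extend the functors to $G$-objects by applying them to the action (the paper's explicit descriptions---the permuted generators of $\mathcal{L}(X)$ and the conjugation action on $\mathsf{s\widehat L_\infty^{St}}\left(mc^\bullet,L\right)$ with $mc^\bullet$ acted on trivially---coincide with your functorial $g\mapsto \mathcal{L}\left(\rho_X(g)\right)$ and $g\mapsto R\left(\rho_L(g)\right)$), and both then obtain the equivariant adjunction by restricting the non-equivariant one. Your conjugation-equivariance computation for $\Phi$ merely spells out the restriction step that the paper asserts in one line, so it is a welcome but not divergent elaboration.
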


\begin{proof}
Let us define the functors on objects and morphisms:
\begin{enumerate}
	\item Let $X_\bullet$ be a $G$-simplicial set. 
	Recall that
	$\mathcal L\left(X_\bullet\right)$ is quasi-free on the normalized simplicial chains $C_*\left(X_\bullet;\Q\right)$. That is, 
	the generators of  $\mathcal L(X_\bullet)$ as a free $sL_\infty$ algebra are in bijection with the non-degenerate simplices of $X_\bullet$. 
	The $G$-action permutes these generators as dictated by the original $G$-action on $X_\bullet$, and it extends to the $sL_\infty$-structure by
	\begin{equation*}
		g \cdot \ell_n\left(x_{1},...,x_n\right) = \ell_n\left(g\cdot x_1 ,...,g\cdot x_n\right).
	\end{equation*} With this definition, given a $G$-equivariant map $f:X_\bullet \to Y_\bullet$, the induced map $\mathcal L\left(f\right)$ becomes $G$-equivariant.

\item Let $L$ be a complete $G$-$sL_\infty$-algebra. Recall that 
\begin{equation}\label{Ecu:Realization}
	R(L) = \mathsf{s \widehat L_\infty^{St}}\left(mc^\bullet, L\right),
\end{equation} where $mc^\bullet$ is the universal Maurer-Cartan algebra (see \cite[Def. 2.8]{Rob20}), a cosimplicial complete $sL_\infty$-algebra. Each $mc^n$ is therefore a certain complete $sL_\infty$ algebra on the standard $n$-simplex $\Delta^n$. Equip each $mc^n$ with the trivial $G$-action, and endow each Hom-space of Equation (\ref{Ecu:Realization}) dimension-wise with the conjugation action: if $x\in R_n(L) = \mathsf{s \widehat L_\infty^{St}}\left(mc^n, L\right)$ and $g\in G,$ then $g\cdot x:mc^n\to L$ is the $n$-simplex of $R(L)$ defined by $$z\mapsto (g\cdot x)(z) := g\cdot x(z).$$ Then $R(L)$ is a $G$-simplicial set, and a $G$-equivariant map of $sL_\infty$ algebras produces a $G$-equivariant map between the corresponding $G$-simplicial sets.
\end{enumerate}
The adjunction between these $G$-objects is given by the restriction of the adjunction between the original objects.
\end{proof}

One can also extend the $G$-action on a complete $sL_\infty$ algebra $L$ to a $G$-action on the simplicial set  $\gamma_\bullet(L)$ by using a known isomorphism $\gamma_\bullet(L)\cong R(L)$ \cite[Thm. 2.17]{Rob20},
and extend all our results from $R(L)$ to $\gamma_\bullet(L)$ using this isomorphism.
On the other hand, 
for complete dg Lie algebras, 
the adjoint pair of Buijs--Félix--Murillo--Tanré
also extends to an adjunction 
$$\mathfrak L : \GsSet \leftrightarrows G\textrm{-}\mathsf{cDGL} : \langle - \rangle.$$
In fact, by mimicking the arguments we give for the pair $(\mathcal L,R)$, 
one obtains the exact analogous result for $(\mathfrak{L}, \langle - \rangle)$. 
Thus, from now on, we  omit the details on the alternative constructions $\gamma_\bullet(L)$ and $\langle L \rangle$ and focus just on the relationship between $\MC(L)$ and $R(L)$.

\begin{proposition} \label{Prop: Equivalent homotopy fixed points}
	For any group $G$ and complete $G$-$sL_\infty$ algebra $L$, there are an isomorphism of simplicial sets, and a homotopy equivalence
	\begin{equation*}
\gamma_\bullet(L) \xrightarrow{\cong } 	R(L) \xrightarrow{\simeq} \MC(L),
\end{equation*} both of which are $G$-equivariant.
In particular, there are induced isomorphism and homotopy equivalences when taking homotopy fixed points,
$$\gamma_\bullet(L)^{hG} \cong  R(L)^{hG} \simeq \MC(L)^{hG}.$$
\end{proposition}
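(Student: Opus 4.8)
The plan is to reduce the whole statement to the single fact that the two classical comparison maps are compatible with the $G$-actions; everything about homotopy fixed points then follows formally. Recall that the isomorphism $\gamma_\bullet(L)\xrightarrow{\cong}R(L)$ is \cite[Thm.~2.17]{Rob20} and that the homotopy equivalence $R(L)\xrightarrow{\simeq}\MC(L)$ is the one recalled in \cite{Rob20}; both are already known non-equivariantly, so the only genuinely new input is $G$-equivariance.

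The key observation is that, by equation (\ref{ActionOnBrackets}), a $G$-action on $L$ is exactly a homomorphism $G\to\Aut_{\mathsf{s \widehat L_\infty^{St}}}(L)$, i.e. each $g\in G$ acts on $L$ as a \emph{strict} $sL_\infty$-automorphism. Since $\gamma_\bullet$, $R$ and $\MC$ are all functorial for strict morphisms, each of them sends this homomorphism to a $G$-action on the associated simplicial set, and I would first check that these functorial actions coincide with the conjugation actions already in play. For $R(L)=\mathsf{s \widehat L_\infty^{St}}(mc^\bullet,L)$ this is immediate from Proposition \ref{Prop:Adjoint pair}, since $R(g)(x)=g\circ x$ is the map $z\mapsto g\cdot x(z)$; for $\MC(L)$ it is equally direct, as $\MC(g)=MC(g\widehat\otimes\id)$ sends $\sum_i x_i\otimes a_i$ to $\sum_i (gx_i)\otimes a_i$, matching the action fixed in Section \ref{MCSimplicial}; and for $\gamma_\bullet(L)$ there is nothing to check, because its $G$-action is \emph{defined} by transport along the isomorphism with $R(L)$.

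Once the three actions are recognised as functorial actions, $G$-equivariance of the comparison maps is automatic: each map is a natural transformation between functors on $\mathsf{s \widehat L_\infty^{St}}$, so the naturality square attached to every automorphism $g\colon L\to L$ commutes, which is precisely the statement that the map intertwines the $G$-actions. Thus $\gamma_\bullet(L)\xrightarrow{\cong}R(L)$ is a $G$-equivariant isomorphism and $R(L)\xrightarrow{\simeq}\MC(L)$ is a $G$-equivariant weak equivalence, for any group $G$. Applying the functor $(-)^{hG}=\Map_G(EG,-)$ then yields an isomorphism $\gamma_\bullet(L)^{hG}\cong R(L)^{hG}$, while Proposition \ref{Prop: w.e. + G map implies w.e. homotopy fixed points} turns the equivariant weak equivalence into a weak equivalence $R(L)^{hG}\xrightarrow{\simeq}\MC(L)^{hG}$; since the homotopy fixed points of these Kan complexes are again Kan complexes, this weak equivalence is a homotopy equivalence by Whitehead's theorem.

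The main obstacle I expect is bookkeeping rather than conceptual: one must verify carefully that the conjugation actions written down by hand on $R(L)$ and $\MC(L)$ really are the functorial actions $R(g)$ and $\MC(g)$, and that the two classical comparison maps are natural with respect to arbitrary strict morphisms (so that the relevant naturality squares are genuinely available), not merely defined on a single algebra. Beyond this identification, no new homotopy-theoretic input is required.
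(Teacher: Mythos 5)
Your proposal is correct, and its skeleton (quote the non-equivariant comparison from \cite{Rob20}, establish $G$-equivariance of the two comparison maps, then apply $\Map_G(EG,-)$ together with Proposition \ref{Prop: w.e. + G map implies w.e. homotopy fixed points}) coincides with the paper's; where you genuinely diverge is in how equivariance is established. The paper unwinds the construction: it factors the comparison as $R(L)\cong MC\left(L\otimes C_\bullet\right)\xrightarrow{I_\bullet} MC\left(L\otimes \Omega_\bullet\right)=\MC(L)$ and observes that $I_\bullet$ is assembled, via the Dupont contraction, from sums and composites of $i\otimes \id_L$ with $G$ acting trivially on $C_\bullet$ and $\Omega_\bullet$, so equivariance is read off directly. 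You instead recast the $G$-actions as the functorial images of the homomorphism $G\to\Aut_{\mathsf{s \widehat L_\infty^{St}}}(L)$ under $R$ and $\MC$ (your identifications of these with the hand-defined actions are correct, and your treatment of $\gamma_\bullet(L)$ matches the paper, whose action on $\gamma_\bullet(L)$ is indeed defined by transport along $\gamma_\bullet(L)\cong R(L)$), and then deduce equivariance from naturality of the comparison maps against strict morphisms. This is a clean and more formal route, which would apply verbatim to any natural comparison of integration functors; its one deferred point --- that $R(L)\to\MC(L)$ is natural in $L$ with respect to strict morphisms, which you correctly flag as the remaining bookkeeping --- is true precisely because the homotopy transfer data defining $I_\bullet$ lives entirely on the $C_\bullet/\Omega_\bullet$ factor, so that strict morphisms $f\otimes\id$ commute with it; in other words, the inspection the paper carries out for automorphisms is exactly the verification your naturality claim rests on, carried out once and for all. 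So the two arguments have the same mathematical content, with yours trading the paper's self-contained computation for a naturality statement that still needs (the same) one-line justification from the construction in \cite{Rob20}.
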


\begin{proof}
	Disregarding the $G$-action, this result is known, see for instance \cite[Thm. 2.17]{Rob20}. 
	Next, we  check that the known equivalences are compatible with the $G$-action.
	As seen in the mentioned reference, there is a sequence of maps 
	\begin{equation*}
		R(L) \cong MC\left(L\otimes C_\bullet\right) \xrightarrow{I_\bullet} MC\left(L\otimes \Omega_\bullet\right) = \MC(L).
	\end{equation*} 
The canonical identification $R(L) \cong MC\left(L\otimes C_\bullet\right)$
transports the $G$-action of $R(L)$ into a $G$-action on $MC\left(L\otimes C_\bullet\right)$ where $G$ acts trivially on $C_\bullet,$ and as  dictated by the original action on $L$.
The map $I_\bullet$ is built as follows. First, the Dupont contraction from $\Omega_\bullet$ to $C_\bullet$ can be shown to endow $L\otimes C_\bullet$ with a canonical $sL_\infty$ structure together with an $sL_\infty$ morphism $I:L\otimes C_\bullet \to L\otimes \Omega_\bullet$. The components of this $sL_\infty$ morphism, which when restricted to Maurer-Cartan elements constitutes $I_\bullet$, is made up from sums and compositions of the tensor product of the original injection of elementary Whitney forms $i: C_\bullet \to \Omega_\bullet$  with the identity on $L$ (see for instance \cite[Sec. 3.2]{Rob19}). Given that $G$ acts trivially on $C_\bullet$ and $\Omega_\bullet$,  and $G$ acts as it originally did on $L$, the resulting map $I_\bullet$ is $G$-equivariant.

To finish, recall that a homotopy equivalence that is $G$-equivariant induces a homotopy equivalence on homotopy fixed points (Proposition \ref{Prop: w.e. + G map implies w.e. homotopy fixed points}).
\end{proof}

The following is the main result of this section. 
The analogous statement for the functors $\gamma_\bullet(L)$ and $\langle L\rangle$ also hold, but are omitted from the statement for conciseness.

\begin{corollary}\label{Corollary: comparacion}
	For any group $G$ and complete $G$-$sL_\infty$ algebra $L$, there is a commutative square whose vertical arrows are weak equivalences
	\begin{center}
\begin{tikzcd}
	R(L)^G \arrow[r, hook] \arrow[d, "\simeq"] & R(L)^{hG} \arrow[d, "\simeq"] \\
	\MC(L)^G \arrow[r, hook]                   & \MC(L)^{hG}                  
\end{tikzcd}
	\end{center} 
If $G$ is finite and $L=L_{\geq 1}$ is a positively graded complete $sL_\infty$ algebra, then all arrows are homotopy equivalences. 
In particular, the natural inclusion $R(L)^G \hookrightarrow R(L)^{hG}$ is a homotopy equivalence of Kan complexes.
\end{corollary}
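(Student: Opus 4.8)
The plan is to build the square from the natural $G$-equivariant comparison $\rho\colon R(L)\xrightarrow{\simeq}\MC(L)$ furnished by Proposition \ref{Prop: Equivalent homotopy fixed points}, applying to it the functors $(-)^G$ and $(-)^{hG}$ to obtain the two vertical legs $\rho^G$ and $\rho^{hG}$. The horizontal maps are the canonical inclusions $\iota_R\colon R(L)^G\hookrightarrow R(L)^{hG}$ and $\iota_{\MC}\colon \MC(L)^G\hookrightarrow \MC(L)^{hG}$ induced by the collapse $EG\xrightarrow{\simeq}*$, so commutativity of the square, $\rho^{hG}\circ\iota_R=\iota_{\MC}\circ\rho^G$, is just naturality of $(-)^G\hookrightarrow(-)^{hG}$ in the $G$-simplicial set variable. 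Every corner is a Kan complex: $R(L)$ is fibrant, so $R(L)^G=\Map_G(*,R(L))$ and $R(L)^{hG}=\Map_G(EG,R(L))$ are Kan by Proposition \ref{Prop: G Mapping spaces are Kan complexes}, and likewise for $\MC(L)$. The right leg $\rho^{hG}$ is a weak equivalence straight from Proposition \ref{Prop: w.e. + G map implies w.e. homotopy fixed points}, since $\rho$ is a $G$-equivariant weak equivalence.

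The delicate point is the left leg $\rho^G$, because the strict fixed-point functor is not homotopy invariant. I would handle it by showing that both integration functors commute with fixed points, reducing $\rho^G$ to a non-equivariant comparison. For $\MC$ this is Remark \ref{Remark1}: $\MC(L)^G\cong\MC(L^G)$. For $R$ the analogous statement holds because $G$ acts trivially on the cosimplicial algebra $mc^\bullet$, so a strict morphism $mc^n\to L$ is $G$-equivariant exactly when its image lands in $L^G$; hence $R(L)^G=\mathsf{s \widehat L_\infty^{St}}(mc^\bullet,L^G)=R(L^G)$. Since $L^G\hookrightarrow L$ is a strict morphism and $\rho$ is natural with respect to strict morphisms, these two identifications are compatible with $\rho$, and $\rho^G$ becomes precisely the \emph{group-free} comparison $R(L^G)\xrightarrow{\simeq}\MC(L^G)$ applied to the complete $sL_\infty$ algebra $L^G$. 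That map is a weak equivalence by the underlying statement of Proposition \ref{Prop: Equivalent homotopy fixed points}. This proves the first assertion for an arbitrary group $G$.

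It remains to treat the case $G$ finite and $L=L_{\geq1}$. Here Theorem \ref{HomEq} gives that the bottom inclusion $\iota_{\MC}$ is a homotopy equivalence. In the identity $\rho^{hG}\circ\iota_R=\iota_{\MC}\circ\rho^G$, the right-hand side is a composite of weak equivalences, hence a weak equivalence, so $\rho^{hG}\circ\iota_R$ is one as well; since $\rho^{hG}$ is a weak equivalence, the two-out-of-three property forces $\iota_R\colon R(L)^G\hookrightarrow R(L)^{hG}$ to be a weak equivalence. As all four objects are Kan complexes, every arrow in the square is a homotopy equivalence, which is the final claim.

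The step I expect to be the main obstacle is the identification $R(L)^G\cong R(L^G)$ together with its compatibility with $\rho$: this is what converts the a priori badly behaved fixed-point functor into the controlled, known non-equivariant comparison, and it is the only place where the specific structure of the realization functor $R$ (rather than formal properties of $G$-simplicial sets) enters.
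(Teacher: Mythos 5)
Your proposal is correct and follows essentially the same route as the paper's proof: commutativity by construction/naturality, the right leg via Proposition \ref{Prop: w.e. + G map implies w.e. homotopy fixed points} (equivalently Proposition \ref{Prop: Equivalent homotopy fixed points}), the left leg by the identifications $R(L)^G = R\left(L^G\right)$ and $\MC(L)^G = \MC\left(L^G\right)$ reducing to the non-equivariant comparison, and the finite case by Theorem \ref{HomEq} plus two-out-of-three. The only difference is that you spell out the verification $R(L)^G = R\left(L^G\right)$ (via the trivial $G$-action on $mc^\bullet$ and $L^G$ being a complete $sL_\infty$ subalgebra), which the paper compresses into the remark that the proof of Remark \ref{Remark1} works for $R$ as well.
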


\begin{proof}
	The commutativity of the square follows from its construction. 
	Let us show the vertical arrows are homotopy equivalences: 
\begin{itemize}
	\item The vertical left arrow. Since for any complete $sL_\infty$ algebra $M$ there is a homotopy equivalence $R(M)\xrightarrow{\simeq} \MC(M)$, and the functors $R$ and $\MC$ commute on the nose with taking $G$-fixed points, (see Remark \ref{Remark1}, the same proof works for $R(L)$) we have $$R(L)^G = R\left(L^G\right) \xrightarrow{\simeq } \MC\left(L^G\right) = \MC(L)^G.$$
	\item The vertical right arrow is a homotopy equivalence by Proposition \ref{Prop: Equivalent homotopy fixed points}.
\end{itemize}
Assuming the extra hypotheses, we easily see that the rest of arrows are homotopy equivalences: the bottom arrow is a homotopy equivalence by the main result, Theorem \ref{Teo:Main}. Finally, it follows from the 2-out-3 property that the arrow $R(L)^G \hookrightarrow R(L)^{hG}$ is a homotopy equivalence.
\end{proof}

The following result will not be explicitly used, but it is worth recording as a basic fact of the homotopy theory of complete $G$-$s\widehat L_\infty$ algebras.
The analogous statement holds true in the category $G$-$\mathsf{cDGL}$,
and the proof in that case is essentially the same.

\begin{theorem}
	For a finite group $G$, there is a cofibrantly generated model category structure on $G\textrm{-}\mathsf{s\widehat L_\infty^{St}}$ such that:
	\begin{enumerate}
		\item the weak equivalences are the $G$-equivariant strict $sL_\infty$ morphisms $f:L\to L'$ 
		whose underlying map is a weak equivalence when forgetting the $G$-action,
		\item the fibrations are the $G$-equivariant strict $sL_\infty$ morphisms $f:L\to L'$ 
		whose underlying map is fibration when forgetting the $G$-action, and
		\item the cofibrations are determined by the lifting property against acyclic fibrations.
	\end{enumerate}
	The canonical maps $\mathcal L \left(\partial \Delta^n \times G\right) \hookrightarrow \mathcal{L} \left(\Delta^n\times G\right)$, for $n\geq 0$, form a set of generating cofibrations, 
	and the canonical maps $\mathcal L\left(\Delta_k^n \times G \right)\hookrightarrow \mathcal L\left(\Delta^n \times G\right)$, for $n\geq 0$, form a set of generating acyclic cofibrations. 
\end{theorem}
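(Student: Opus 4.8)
The plan is to obtain the model structure by the transfer principle \cite[Thm. 11.3.2]{Hir09}, applied to the adjoint pair $\mathcal L \dashv R$ of Proposition \ref{Prop:Adjoint pair}, transferring the model structure on $\GsSet$ described above. First I would record that $G\textrm{-}\mathsf{s\widehat L_\infty^{St}}$ is bicomplete: it is the category of $G$-objects in $\mathsf{s\widehat L_\infty^{St}}$, which is bicomplete \cite{Rob20}, and categories of $G$-objects in a bicomplete category are again bicomplete. Declaring the weak equivalences and fibrations to be the $G$-equivariant strict morphisms whose underlying maps are weak equivalences, respectively fibrations, the candidate generating cofibrations and generating acyclic cofibrations are $\mathcal L(I)$ and $\mathcal L(J)$, where $I=\{\partial\Delta^n\times G\hookrightarrow\Delta^n\times G\}$ and $J=\{\Lambda^n_k\times G\hookrightarrow\Delta^n\times G\}$ are the generating sets for $\GsSet$. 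Applying $\mathcal L$ produces exactly the maps listed in the statement.

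The transfer principle requires two hypotheses. The smallness hypothesis---that the domains of $\mathcal L(I)$ and $\mathcal L(J)$ are small relative to the respective classes of relative cell complexes---holds because $\mathcal L$ is a left adjoint, and it reduces, via the forgetful functor to the non-equivariant category, to the smallness already established in \cite{Rob20}. The second and only substantial hypothesis is the acyclicity condition: every relative $\mathcal L(J)$-cell complex must be a weak equivalence. The hard part will be exactly this acyclicity condition, and I would handle it by reduction to the non-equivariant model structure of \cite{Rob20} as follows.

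Write $U\colon G\textrm{-}\mathsf{s\widehat L_\infty^{St}}\to\mathsf{s\widehat L_\infty^{St}}$ and $U'\colon\GsSet\to\sSet$ for the functors forgetting the $G$-action. Since $\mathcal L$ is quasi-free on the normalized chains and the $G$-action merely permutes generators, one has $U\mathcal L=\mathcal L U'$. As $G$ is finite, $U$ is a forgetful functor between categories of $G$-objects and admits a right adjoint (coinduction), hence preserves all colimits; it therefore carries a relative $\mathcal L(J)$-cell complex to a relative $\mathcal L(U'J)$-cell complex. Now each map in $U'J$ is a coproduct of $|G|$ copies of a generating acyclic cofibration $\Lambda^n_k\hookrightarrow\Delta^n$ of $\sSet$, so every map in $\mathcal L(U'J)$ is an acyclic cofibration in the non-equivariant model structure of \cite{Rob20}; a relative cell complex built from acyclic cofibrations is again an acyclic cofibration, hence a weak equivalence. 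Since weak equivalences in $G\textrm{-}\mathsf{s\widehat L_\infty^{St}}$ are by definition detected by $U$, the original relative $\mathcal L(J)$-cell complex is a weak equivalence, which establishes the acyclicity condition. The transfer principle then yields the model structure with the stated generating sets, the characterization of cofibrations by left lifting against acyclic fibrations being automatic. The same argument, with $(\mathcal L,R)$ replaced by the pair $(\mathfrak L,\langle-\rangle)$ and \cite{Rob20} by the corresponding integration result for complete dg Lie algebras, proves the analogous statement in $G\textrm{-}\mathsf{cDGL}$.
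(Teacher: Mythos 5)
Your proposal is correct and follows essentially the same route as the paper: both obtain the model structure by applying the transfer principle of \cite[Thm. 11.3.2]{Hir09} to the adjunction $\mathcal L : \GsSet \leftrightarrows G\textrm{-}\mathsf{s\widehat L_\infty^{St}} : R$ of Proposition \ref{Prop:Adjoint pair}, with the generating (acyclic) cofibrations given by applying $\mathcal L$ to the generating sets of $\GsSet$. The paper leaves the verification of the transfer hypotheses implicit (remarking only that one may equivalently transfer directly from $\sSet$ by composing the two adjunctions), whereas you spell out smallness and the acyclicity condition via the colimit-preserving forgetful functor and the identity $U\mathcal L=\mathcal L U'$; this is a legitimate filling-in of detail rather than a different argument.
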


\begin{proof}
	Consider the adjoint pair of functors $$\mathcal L : \GsSet \leftrightarrows G\textrm{-}\mathsf{s\widehat L_\infty^{St}} : R$$
	described at the beginning of this section.
	Since the category $G\textrm{-}\mathsf{s\widehat L_\infty^{St}}$ has finite limits and small colimits, 
	the transfer principle \cite[Thm. 11.3.2]{Hir09} applies. 
	Remark that we can also transfer the same model structure directly from $\mathsf{sSet}$ by composing the corresponding two left adjoints, and the two right adjoints.
\end{proof}

\bibliographystyle{plain}
\bibliography{MyBib}

\begin{thebibliography}{10}

\bibitem{All93}
C.~Allday and V.~Puppe.
\newblock {\em Cohomological methods in transformation groups}, volume~32 of
  {\em Cambridge Studies in Advanced Mathematics}.
\newblock Cambridge University Press, Cambridge, 1993.

\bibitem{Ber03}
I.~Berger, C.and~Moerdijk.
\newblock Axiomatic homotopy theory for operads.
\newblock {\em Comment. Math. Helv.}, 78(4):805--831, 2003.

\bibitem{Ber15}
A.~Berglund.
\newblock Rational homotopy theory of mapping spaces via {L}ie theory for
  ${L}_\infty$-algebras.
\newblock {\em Homology, Homotopy Appl.}, 17(2):343--369, 2015.

\bibitem{Bou76}
A.~K. Bousfield and V.~K. A.~M. Gugenheim.
\newblock On {${\rm PL}$} de {R}ham theory and rational homotopy type.
\newblock {\em Mem. Amer. Math. Soc.}, 8(179):ix+94, 1976.

\bibitem{Bou72}
A.~K. Bousfield and D.~M. Kan.
\newblock {\em Homotopy limits, completions and localizations}.
\newblock Lecture Notes in Mathematics, Vol. 304. Springer-Verlag, Berlin-New
  York, 1972.

\bibitem{Bre72}
G.~E. Bredon.
\newblock {\em Introduction to compact transformation groups}.
\newblock Academic Press, New York-London, 1972.
\newblock Pure and Applied Mathematics, Vol. 46.

\bibitem{Bui15}
U.~Buijs, Y.~F{\'e}lix, S.~Huerta, and A.~Murillo.
\newblock The homotopy fixed point set of {L}ie group actions on elliptic
  spaces.
\newblock {\em P. Lond. Math. Soc.}, 110(5):1135--1156, 2015.

\bibitem{Bui09}
U.~Buijs, Y.~F{\'e}lix, and A.~Murillo.
\newblock Rational homotopy of the (homotopy) fixed point sets of circle
  actions.
\newblock {\em Adv. in Math.}, 222(1):151--171, 2009.

\bibitem{Bui20}
U.~Buijs, Y.~F\'{e}lix, A.~Murillo, and D.~Tanr\'{e}.
\newblock {\em Lie models in topology}, volume 335 of {\em Progress in
  Mathematics}.
\newblock Birkh\"{a}user/Springer, Cham, 2020.

\bibitem{Car91}
G.~Carlsson.
\newblock Equivariant stable homotopy and {S}ullivan's conjecture.
\newblock {\em Invent. Math.}, 103(3):497--525, 1991.

\bibitem{Car92}
G.~Carlsson.
\newblock A survey of equivariant stable homotopy theory.
\newblock {\em Topology}, 31(1):1--27, 1992.

\bibitem{Cur71}
E.~B. Curtis.
\newblock Simplicial homotopy theory.
\newblock {\em Advances in Math.}, 6:107--209, 1971.

\bibitem{Dol15}
V.~A. Dolgushev and C.~L. Rogers.
\newblock A version of the {G}oldman--{M}illson theorem for filtered
  {$L_\infty$}-algebras.
\newblock {\em Journal of Algebra}, 430:260--302, 2015.

\bibitem{Yve12}
Y.~F{\'e}lix, S.~Halperin, and J-C Thomas.
\newblock {\em Rational homotopy theory}, volume 205.
\newblock Springer Science \& Business Media, 2012.

\bibitem{Goe99}
Goerss~P. G. and Jardine~J. F.
\newblock {\em {Simplicial homotopy theory.}}, volume 174.
\newblock Basel: Birkh\"auser, 1999.

\bibitem{Get09}
E.~Getzler.
\newblock Lie theory for nilpotent $l_\infty$-algebras.
\newblock {\em Ann. of Math.}, pages 271--301, 2009.

\bibitem{Gol98}
M.~Golasi\'{n}ski.
\newblock Equivariant rational homotopy theory as a closed model category.
\newblock {\em J. Pure Appl. Algebra}, 133(3):271--287, 1998.

\bibitem{Goy89}
J.~O. Goyo.
\newblock {\em The {S}ullivan model of the homotopy-fixed-point set}.
\newblock ProQuest LLC, Ann Arbor, MI, 1989.
\newblock Thesis (Ph.D.)--University of Toronto (Canada).

\bibitem{Hin97A}
V.~Hinich.
\newblock Descent of {D}eligne groupoids.
\newblock {\em Internat. Math. Res. Notices}, (5):223--239, 1997.

\bibitem{Hir09}
P~S Hirschhorn.
\newblock {\em Model categories and their localizations}.
\newblock Number~99. American Mathematical Soc., 2009.

\bibitem{Kon03}
M.~Kontsevich.
\newblock Deformation quantization of {P}oisson manifolds.
\newblock {\em Lett Math Phys.}, 66(3):157--216, 2003.

\bibitem{May96}
J.~P. May.
\newblock {\em Equivariant homotopy and cohomology theory}, volume~91 of {\em
  CBMS Regional Conference Series in Mathematics}.
\newblock Published for the Conference Board of the Mathematical Sciences,
  Washington, DC; by the American Mathematical Society, Providence, RI, 1996.

\bibitem{Mil84}
H.~Miller.
\newblock The {S}ullivan conjecture on maps from classifying spaces.
\newblock {\em Ann. of Math}, 120:39--87, 1984.

\bibitem{Prid10}
J.~P. Pridham.
\newblock Unifying derived deformation theories.
\newblock {\em Adv. Math.}, 224(3):772--826, 2010.

\bibitem{Rob19}
D.~Robert-Nicoud.
\newblock Representing the deformation $\infty$-groupoid.
\newblock {\em Algebr Geom Topol.}, 19(3):1453--1476, 2019.

\bibitem{Rob20}
D.~Robert-Nicoud and B.~Vallette.
\newblock Higher {L}ie theory.
\newblock {\em arXiv preprint 2010.10485}, 2020.

\bibitem{Rog20}
Christopher~L Rogers.
\newblock Complete filtered {$L_\infty$}-algebras and their homotopy theory.
\newblock {\em arXiv preprint 2008.01706}, 2020.

\bibitem{Scu02}
L.~Scull.
\newblock Rational {$S^1$}-equivariant homotopy theory.
\newblock {\em Trans. Amer. Math. Soc.}, 354(1):1--45, 2002.

\bibitem{Scu08}
L.~Scull.
\newblock A model category structure for equivariant algebraic models.
\newblock {\em Trans. Amer. Math. Soc.}, 360(5):2505--2525, 2008.

\bibitem{Sul77}
D.~Sullivan.
\newblock Infinitesimal computations in topology.
\newblock {\em Inst. Hautes \'{E}tudes Sci. Publ. Math.}, (47):269--331, 1977.

\bibitem{Tri82}
G.~V. Triantafillou.
\newblock Equivariant minimal models.
\newblock {\em Trans. Amer. Math. Soc.}, 274(2):509--532, 1982.

\bibitem{Bar93}
B.~Zwiebach.
\newblock Closed string field theory: quantum action and the
  {B}atalin-{V}ilkovisky master equation.
\newblock {\em Nuclear Phys. B}, 390(1):33--152, 1993.

\end{thebibliography}

\bigskip

\noindent\sc{José Manuel Moreno Fernández}\\ 
\noindent\sc{Departamento de Álgebra, Geometría y Topología, \\ Universidad de Málaga, 29080 \\ Málaga, Spain}\\
\noindent\tt{josemoreno@uma.es}\\

\medskip

\noindent\sc{Felix Wierstra}\\ 
\noindent\sc{Korteweg-de Vries Institute for Mathematics \\ University of Amsterdam \\ Science Park 105-107, 1098 XG Amsterdam, Netherlands}\\
\noindent\tt{felix.wierstra@gmail.com}

\end{document}